\newcommand{\D}{\displaystyle}
\newcommand{\al}{\alpha}
\newcommand{\myref}[1]{(\ref{#1})}
\newcommand{\mathd}{\mathrm{d}}
\newenvironment{proof}{\noindent\textbf{Proof\ }}{\hspace*{\fill}$\Box$\medskip}
\newtheorem{theorem}{\textbf{Theorem}}[section]
\newtheorem{remark}{\textbf{Remark}}[section]
\newtheorem{proposition}{\textbf{Proposition}}[section]
\begin{document}
\title{On Singularity Formation of a Nonlinear Nonlocal System}
\author{Thomas Y. Hou\thanks{Applied and Comput. Math, Caltech,
Pasadena, CA 91125. {\it Email: hou@acm.caltech.edu.}} \and 
Congming Li\thanks{Department of Applied Mathematics, University of
Colorado, Boulder, CO. 80309. {\it Email: cli@colorado.edu}} \and
Zuoqiang Shi\thanks{Applied and Comput. Math, Caltech,
Pasadena, CA 91125. {\it Email: shi@acm.caltech.edu.}} \and
Shu Wang\thanks{College of Applied Sciences, Beijing University of
Technology, Beijing 100124, China. {\it Email: wangshu@bjut.edu.cn}}
\and Xinwei Yu\thanks{Department of Mathematics, University of Alberta,
Edmonton, AB T6G 2G1, Canada. {\it Email: xinweiyu@math.ualberta.ca}}
}
\date{\today}
\maketitle

\begin{abstract}

We investigate the singularity formation of a nonlinear
nonlocal system. This nonlocal system is a simplified
one-dimensional system of the 3D model that was recently proposed
by Hou and Lei in \cite{HouLei09} for axisymmetric 3D 
incompressible Navier-Stokes equations with swirl.
The main difference between the 3D model of Hou and Lei and the 
reformulated 3D
Navier-Stokes equations is that the convection term is neglected
in the 3D model. In the nonlocal system we consider in this paper, we
replace the Riesz operator in the 3D model by the Hilbert transform.
One of the main results of this paper is that we prove rigorously
the finite time singularity formation of the nonlocal system for a large
class of smooth initial data with finite energy. We also prove 
the global regularity for a class of smooth initial data. Numerical 
results will be presented to demonstrate the asymptotically 
self-similar blow-up of the solution. The blowup rate
of the self-similar singularity of the nonlocal system 
is similar to that of the 3D model.

\end{abstract}

\textbf{Key words}: Finite time singularities, 
nonlinear nonlocal system, stabilizing effect of convection.

\section{Introduction}

The question of whether a solution of the 3D incompressible
Navier-Stokes equations can develop a finite time singularity from
smooth initial data with finite energy is one of the most outstanding
mathematical open problems \cite{Fefferman,MB02,Temam01}. 
A main difficulty in 
obtaining the global regularity of the 3D Navier-Stokes equations 
is due to the presence of the vortex stretching term, which has a 
formal quadratic nonlinearity in vorticity. So far, most regularity
analysis for the 3D Navier-Stokes equations uses energy estimates.
Due to the incompressibility condition, the convection term does 
not contribute to the energy norm of the velocity field or
any $L^p$ ($1 < p \leq \infty$) norm of the vorticity field.
In a recent paper by Hou and Lei \cite{HouLei09}, the authors
investigated the stabilizing effect of convection by constructing
a new 3D model for axisymmetric 3D incompressible Navier-Stokes
equations with swirl. This model preserves
almost all the properties of the full 3D Navier-Stokes
equations except for the convection term which is neglected. If 
one adds the convection term back to the 3D model, one would recover 
the full Navier-Stokes equations. They also presented numerical evidence 
which supports that the 3D model may develop a potential finite time 
singularity. They further studied the mechanism that leads to 
these singular events in the 3D model and how the convection term in 
the full Navier-Stokes equations destroys such a mechanism. 

In this paper, we propose a simplified nonlocal system for the 3D 
model proposed by
Hou and Lei in \cite{HouLei09}. The nonlocal system is derived by first
reformulating the 3D model of Hou and Lei as the following two-by-two
nonlinear and nonlocal system of partial differential equations:
  \begin{equation}
  \label{Hou-Lei}
    u_t = 2 uv + \nu \Delta u, \hspace{2em} v_t = (-\Delta )^{- 1} \partial_{z z} u^2 + \nu \Delta v,
  \end{equation}
where $u= u^\theta/r$, $v= \psi^\theta_z/r$, and 
$\Delta = \partial_z^2 + \partial_r^2 + \frac{3}{r} \partial_r$,
and $u^\theta$ is the angular velocity component and 
$\psi^\theta$ is the angular stream function respectively,
$r = \sqrt{x^2+y^2}$. By the partial regularity result for
the 3D model \cite{HouLeiCMP09}, which is an analogue 
of the well-known Caffarelli-Kohn-Nirenberg partial
regularity theory for the 3D incompressible Navier-Stokes
equations \cite{CKN82}, we know that the singularity
can only occur along the symmetry axis, i.e. the $z$-axis.
In order to study the potential singularity formation of
the 3D model, it makes sense to construct a simplified 
one dimensional nonlocal system along the $z$-axis. One 
obvious choice is to replace the Riesz operator 
$(-\Delta )^{- 1} \partial_z^2$ by the Hilbert transform $H$
along the $z$ axis, and replace $\Delta u$ by $u_{zz}$, 
$\Delta v$ by $v_{zz}$. This gives rise to our simplified
nonlocal system:
\begin{equation}
\label{model-full}
  u_t = 2uv + \nu u_{zz}, \hspace{2em} v_t = H(u^2) + \nu v_{zz},
\end{equation}
where $H$ is the Hilbert transform, 
\begin{equation}
  \left( H f \right) \left( x \right) = \frac{1}{\pi}\mbox{P.V.}\int_{-
  \infty}^{\infty} \frac{f \left( y \right)}{x - y} \mathd y .
\end{equation}
In our analysis, we will focus on the inviscid version of the
nonlocal system and relabel the variable $z$ as $x$:
\begin{equation}
\label{model}
  u_t = 2uv , \hspace{2em} v_t = H(u^2) ,
\end{equation}
with the initial condition
\begin{eqnarray}
\label{ini data}
u(t=0)=u_0(x),\quad v(t=0)=v_0(x).
\end{eqnarray}

Note that the 1D model (\ref{model-full}) is designed to capture the 
dynamics of the 3D model (\ref{Hou-Lei}) along the $z$-axis only.
Thus, its inviscid model (\ref{model}) does not enjoy the energy 
conservation property of the original model in the three-dimensional 
space.

One of the main results of this paper is that we prove rigorously
the finite time singularity formation of the nonlocal system for 
a large class of smooth initial data with finite energy. 
As we will demonstrate in this paper, the blowup rate of
the self-similar singularity of the nonlocal system
\myref{model}-\myref{ini data} is qualitatively similar to 
that of the 3D model.
The main result of this paper is summarized in the following theorem.
\begin{theorem}
\label{blowup compact}
Assume that the support of $u_0$ is contained in $(a,b)$ and
that $u_0, \; v_0 \in H^1$. Let $\phi(x)=x-a$ and
\begin{eqnarray}
C=4 \int_a^b\phi(x)u_0^2\,v_0\,\mathd x,\quad I_\infty=\int_0^{+
\infty} \frac{\mathd y}{\sqrt{y^3+1}}.\nonumber
\end{eqnarray}
If $C>0$, then the solution of the nonlocal system
\myref{model}-\myref{ini data} 
must develop a finite time singularity in the $H^1$ norm
no later than $\D T^*=\left(\frac{4C}{3\pi(b-a)^2}\right)^{-1/3}I_\infty$.
\end{theorem}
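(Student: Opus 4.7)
The strategy is to extract from the inviscid system a closed differential inequality for a pair of weighted moment functionals whose initial data are controlled by $C$. Set $\phi(x)=x-a$ and define
\[
F_1(t)\;:=\;\int\phi\, u^2\,\mathd x,\qquad F_2(t)\;:=\;\int\phi\, u^2 v\,\mathd x,
\]
so that $4F_2(0)=C>0$. Since $u_t=2uv$ is pointwise linear in $u$, the support of $u(\cdot,t)$ remains inside $(a,b)$ throughout the lifespan of the solution, so $\phi\ge 0$ on $\mathrm{supp}\,u(\cdot,t)$ and $F_1$ is manifestly nonnegative.

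\textbf{Evolution and the key Hilbert-transform identity.} Differentiating under the integral and using the equations,
\[
F_1'(t)=4 F_2(t),\qquad F_2'(t)=4\int\phi\, u^2 v^2\,\mathd x+\int\phi\, u^2\, H(u^2)\,\mathd x.
\]
The heart of the argument is the identity, for nonnegative $w\in L^1\cap L^2$ with support in $(a,b)$,
\[
\int \phi(x)\, w(x)\, H(w)(x)\,\mathd x \;=\; \frac{1}{2\pi}\left(\int w\,\mathd x\right)^{2}.
\]
I would prove it by first using skew-adjointness of $H$ to discard the piece $-a\int w\,H(w)\,\mathd x=0$, then symmetrizing the remaining $\frac{1}{\pi}\int\!\!\int \frac{x\, w(x)w(y)}{x-y}\,\mathd x\,\mathd y$ under $x\leftrightarrow y$: the interchange cancels the antisymmetric kernel and leaves $\frac{1}{\pi}\int\!\!\int w(x) w(y)\,\mathd x\,\mathd y=\frac{1}{\pi}(\int w)^{2}$ after halving. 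Applied with $w=u^2$, and combined with $\phi\le b-a$ on the support (giving $(\int u^2\,\mathd x)^2\ge F_1^2/(b-a)^2$), this yields
\[
F_2'(t)\;\ge\;\frac{1}{2\pi(b-a)^2}\, F_1(t)^2,
\]
so $F_2(t)\ge F_2(0)=C/4>0$ and $F_1$ is strictly increasing.

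\textbf{ODE comparison and blowup time.} Combining the two displays gives $F_1''(t)\ge \frac{2}{\pi(b-a)^2}F_1(t)^2$. Multiplying by $F_1'\ge 0$ and integrating in $t$,
\[
(F_1'(t))^2 \;\ge\; C^2+\frac{4}{3\pi(b-a)^2}\,\bigl(F_1(t)^3-F_1(0)^3\bigr).
\]
Using $F_1^3-F_1(0)^3\ge (F_1-F_1(0))^3$ and switching to $\tilde F:=F_1-F_1(0)$ reduces this to $(\tilde F')^2\ge C^2+\mu\,\tilde F^3$ with $\mu=4/(3\pi(b-a)^2)$ and $\tilde F(0)=0$. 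Rescaling $\tilde F=(C^2/\mu)^{1/3}\, y$ normalizes the inequality to $y'\ge(\mu C)^{1/3}\sqrt{y^3+1}$ with $y(0)=0$, and separation of variables with the quadrature $\int_0^\infty \mathd y/\sqrt{y^3+1}=I_\infty$ yields blowup of $\tilde F$ no later than $(\mu C)^{-1/3}I_\infty$, which is exactly the claimed $T^*$.

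\textbf{$H^1$-singularity and expected obstacle.} Since $F_1(t)\le(b-a)\|u(\cdot,t)\|_{L^2}^2\le(b-a)\|u(\cdot,t)\|_{H^1}^2$, divergence of $F_1$ at $T^*$ forces $\|u(\cdot,t)\|_{H^1}\to\infty$, delivering the stated $H^1$ breakdown. The step requiring the most care is the Hilbert-transform identity: one must justify the principal-value Fubini and the symmetrization rigorously, which here is made legitimate by compact support of $u(\cdot,t)$ and the persistence of $H^1$ regularity on any interval of existence. Once that identity is in hand, the rest is a clean Gronwall-type quadrature.
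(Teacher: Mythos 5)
Your proposal is correct and follows essentially the same route as the paper: the weighted moment $\int\phi\,u^2\,\mathd x$ with $\phi=x-a$, the symmetrization identity for $\int\phi\,u^2H(u^2)\,\mathd x$ (the paper's Proposition \ref{prop hilbert}), the resulting inequality $F''\ge \frac{2}{\pi(b-a)^2}F^2$, and the quadrature via $I_\infty$. The only differences are cosmetic — you track the first-order pair $(F_1,F_2)$ instead of computing $(u^2)_{tt}$ directly, and you handle $F_1(0)>0$ via $F_1^3-F_1(0)^3\ge(F_1-F_1(0))^3$ rather than the paper's reduction to $F(0)=0$.
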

A similar result has been obtained for periodic initial data.

The analysis of the finite time singularity for this nonlocal system is 
rather subtle. The main technical difficulty is that this is a
two-by-two nonlinear nonlocal system. The key issue is under
what condition the solution $u$ has a strong alignment 
with the solution $v$ dynamically. If $u$ and $v$ have a strong
alignment for long enough time, then the right hand side of
the $u$ equation would develop a quadratic nonlinearity dynamically,
which will lead to a finite time blowup. Note that $v$ is coupled
to $u$ in a nonlinear and nonlocal fashion through the Hilbert transform.
It is not clear whether $u$ and $v$ will develop such a nonlinear
alignment dynamically. To establish such a nonlinear alignment,
we need to use the following important property of the Hilbert
transform:
\begin{proposition} 
\label{prop hilbert}
Let $\phi$ be a globally Lipschitz continuous function on $R$. 
For any $f\in L^p(R^1) \cap L^1(R^1)$ and $\phi f\in L^q(R^1)$
  with $\frac 1p+\frac 1q=1$, $1<p, \; q<+\infty$,
we have
\begin{equation}
\label{prop hil}
    \int_{-\infty}^{+\infty}\phi(x)f(x)Hf(x)\mathd x
    =\frac{1}{2\pi}\int_{-\infty}^{+\infty}\int_{-\infty}^{+\infty}
    \frac{\phi(x)-\phi(y)}{x-y}f(x)f(y)\mathd x\mathd y .
  \end{equation}
  \end{proposition}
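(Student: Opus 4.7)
The plan is to exploit the antisymmetry of the Cauchy kernel $1/(x-y)$ under the swap $x\leftrightarrow y$ and to produce the symmetrized form by adding to the identity its swapped copy. First I would check that both sides of \myref{prop hil} are well defined: the left-hand side is finite because $\phi f\in L^q$ and $Hf\in L^p$ (the Hilbert transform is bounded on $L^p$ for $1<p<\infty$), while the right-hand side is absolutely convergent, since Lipschitz continuity of $\phi$ gives $|\phi(x)-\phi(y)|/|x-y|\le \mathrm{Lip}(\phi)$, so the double integrand is dominated by $\mathrm{Lip}(\phi)\,|f(x)||f(y)|$ and $f\otimes f\in L^1(\mathbb R^2)$ by hypothesis.

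Next I would regularize by the truncated Hilbert transform
\begin{equation*}
H_\epsilon f(x)=\frac{1}{\pi}\int_{|x-y|>\epsilon}\frac{f(y)}{x-y}\,\mathd y,
\end{equation*}
so that away from the diagonal Fubini applies without issue. For each $\epsilon>0$, I would write
\begin{equation*}
\int_{\mathbb R}\phi(x)f(x)H_\epsilon f(x)\,\mathd x
=\frac{1}{\pi}\int\!\!\int_{|x-y|>\epsilon}\frac{\phi(x)f(x)f(y)}{x-y}\,\mathd x\,\mathd y,
\end{equation*}
then perform the change of variables $x\leftrightarrow y$ in the double integral. Because the domain $\{|x-y|>\epsilon\}$ is symmetric and $1/(x-y)$ is antisymmetric, this produces the same left-hand integrand with $\phi(x)$ replaced by $-\phi(y)$. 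Averaging the original identity with its swapped version yields
\begin{equation*}
\int_{\mathbb R}\phi(x)f(x)H_\epsilon f(x)\,\mathd x
=\frac{1}{2\pi}\int\!\!\int_{|x-y|>\epsilon}\frac{\phi(x)-\phi(y)}{x-y}f(x)f(y)\,\mathd x\,\mathd y.
\end{equation*}

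Finally I would pass to the limit $\epsilon\to 0$ on both sides. On the right, the integrand is pointwise bounded by $\mathrm{Lip}(\phi)\,|f(x)||f(y)|\in L^1(\mathbb R^2)$, so dominated convergence gives the desired double integral. On the left, $H_\epsilon f\to Hf$ in $L^p$ (standard approximation for the Hilbert transform under the stated $L^p$ hypothesis), and since $\phi f\in L^q$ with conjugate exponent, Hölder's inequality gives convergence of the paired integral. Combining the two limits yields \myref{prop hil}.

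The main obstacle I anticipate is the justification of the symmetrization: one must be careful that the principal-value structure on the left is compatible with the Fubini/swap manipulation, which is precisely why I would first pass to $H_\epsilon$, where the kernel is bounded and ordinary Fubini applies, and only afterwards take $\epsilon\to 0$. Everything else is a matter of invoking the boundedness of the Hilbert transform on $L^p$ and dominated convergence, both of which are supported cleanly by the Lipschitz hypothesis on $\phi$ and the integrability assumptions on $f$.
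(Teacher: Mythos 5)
Your proposal follows essentially the same route as the paper's proof: truncate the Hilbert transform, apply Fubini at fixed $\epsilon$, symmetrize using the oddness of the kernel $1/(x-y)$, and pass to the limit $\epsilon\to 0$ using the Lipschitz bound $|\phi(x)-\phi(y)|/|x-y|\le \mathrm{Lip}(\phi)$ together with dominated convergence on the double integral. The only structural difference is in the limit on the left-hand side: you invoke $H_\epsilon f\to Hf$ in $L^p$ and pair against $\phi f\in L^q$ by H\"older, whereas the paper uses a.e.\ convergence of the truncations together with the maximal truncated transform bound $\|\bar f\|_{L^p}\le C_p\|f\|_{L^p}$ and dominated convergence; both are standard Calder\'on--Zygmund facts and either works.

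The one step where your justification is too quick is the Fubini step at fixed $\epsilon$. Boundedness of the truncated kernel by $1/\epsilon$ does not by itself give absolute integrability of $\phi(x)f(x)f(y)/(x-y)$ over $\{|x-y|>\epsilon\}$, because that heuristic would require $\phi f\in L^1$, which is not among the hypotheses: $\phi f$ is only assumed to lie in $L^q$ (take $f(x)=(1+|x|)^{-2}$ and $\phi(x)=x$, so that $\phi f\notin L^1$ while all the stated hypotheses hold). The repair is the estimate the paper makes explicitly: on $|x-y|\ge\epsilon$ one has $|x-y|^{-1}\le 2(\epsilon+|x-y|)^{-1}$, and since $(\epsilon+|\cdot|)^{-1}\in L^p$ for $p>1$, H\"older's inequality in $x$ followed by integration in $y$ gives
\begin{equation*}
\int\!\!\int_{|x-y|\ge\epsilon}\frac{|\phi(x)f(x)|\,|f(y)|}{|x-y|}\,dx\,dy
\;\le\; 2\,\|\phi f\|_{L^q}\,\bigl\|(\epsilon+|\cdot|)^{-1}\bigr\|_{L^p}\,\|f\|_{L^1}<\infty,
\end{equation*}
which legitimizes Fubini at each fixed $\epsilon$. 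With that one estimate inserted, your argument is complete and coincides with the paper's.
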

Using this property,
we can identify an appropriate test function $\phi$ such that
the time derivative of $\int u^2 \phi dz $ satisfies a nonlinear 
inequality. This inequality implies a finite time blowup 
of the nonlocal system. 

Proposition \ref{prop hilbert} should be a well-known property in
the Harmonic Analysis literature. During the revision of our paper,
we found that an identity which can be used to derive the 
special case $\phi = x$ of Proposition \ref{prop hilbert} has
been used in \cite{Duoandikoetxea08}, see also a recent paper
\cite{LR09b} \footnote{We only learned about the work of \cite{LR09b}
after the presentation of our work at the PIMS workshop
on Hydrodynamics Regularity in August 2009.}. 
However, we have not been able to find a proof
for the general case stated in Proposition \ref{prop hilbert}
in the literature. For the sake of completeness, we provide
a proof of Proposition \ref{prop hilbert} in Section 2.
 
Another interesting result is that we 
prove the global regularity of our nonlocal system for a class of 
smooth initial data. Specifically, we prove the following theorem:
\begin{theorem}
Assume that $u_0, \; v_0 \in H^1$. Further we assume that
$u_0$ has compact support in an interval of size $\delta$
and $v_0$ satisfies the condition
$v_0 \le - 3$ on this interval. Then the $H^1$ norm of the
solution of the nonlocal system \myref{model}-\myref{ini data} 
remains bounded for all time as long as the following holds
  \begin{equation}
\label{cond-v0}
    \delta^{1 / 2}  \left( \|v_{0 x} \|_{L^2} + \frac{1}{3} \delta^{1 / 2} 
\|u_{0 x}
    \|_{L^2}^2 \right) < \frac{1}{4} .
  \end{equation}
Moreover, we have $\|u \|_{L^\infty} \le Ce^{-3 t}$,
$\| u \|_{H^1} \le C e^{-3 t}$, and $\| v \|_{H^1} \le C$
for some constant $C$ which depends on $u_0$, $v_0$, and $\delta$ only.
\end{theorem}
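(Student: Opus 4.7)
The plan is a continuity/bootstrap argument driven by three structural observations. First, because $u_t = 2uv$ is a pointwise linear ODE in $u$, the zeros of $u_0$ are preserved and $\mathrm{supp}\, u(\cdot,t) \subset I := [a, a+\delta]$ for all $t \ge 0$. Second, at the left endpoint of $I$, the nonnegativity of $u^2$ together with its support give
\[
H(u^2)(a,t) \;=\; \frac{1}{\pi}\int_a^{a+\delta}\frac{u^2(y,t)}{a-y}\,\mathd y \;\le\; 0,
\]
so $v_t(a,t) \le 0$ and hence $v(a,t) \le v_0(a) \le -3$ for all $t \ge 0$. Third, whenever $v \le -3/2$ on $I$, $(u^2)_t = 4u^2 v \le -6u^2$ pointwise, giving $|u(x,t)| \le |u_0(x)|e^{-3t}$; combined with $u(a)=0$ and Cauchy-Schwarz this yields $\|u\|_{L^\infty}(t) \le \delta^{1/2}\|u_{0x}\|_{L^2}e^{-3t}$.

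The plan is then to let $T^* = \sup\{T \ge 0 : v(\cdot,t) \le -3/2 \mbox{ on } I \mbox{ for all } t \in [0,T]\}$, positive by continuity and $v_0 \le -3$, and prove $T^* = \infty$. On $[0, T^*]$, integration by parts using that $H$ commutes with $\partial_x$ and is an $L^2$-isometry yields
\[
\frac{d}{dt}\|u_x\|_{L^2}^2 \le -6\|u_x\|_{L^2}^2 + 4\|u\|_{L^\infty}\|u_x\|_{L^2}\|v_x\|_{L^2},\quad
\frac{d}{dt}\|v_x\|_{L^2} \le 2\|u\|_{L^\infty}\|u_x\|_{L^2}.
\]
Using the pointwise decay of $\|u\|_{L^\infty}$ from the third observation together with a mild bound on $\|u_x\|_{L^2}$, the second inequality integrates to
\[
\|v_x\|_{L^2}(t) \;\le\; \|v_{0x}\|_{L^2} + \frac{1}{3}\delta^{1/2}\|u_{0x}\|_{L^2}^2,
\]
so by hypothesis (\ref{cond-v0}) we have $\delta^{1/2}\|v_x\|_{L^2}(t) < 1/4$ throughout $[0, T^*]$. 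Plugging this back into the first inequality together with the Sobolev-type bound $\|u\|_{L^\infty} \le \delta^{1/2}\|u_x\|_{L^2}$ (valid since $u(a)=0$) strictly dominates the cross term by $6\|u_x\|_{L^2}^2$, yielding $\|u_x\|_{L^2}(t) \le Ce^{-3t}$ and hence the full $H^1$ decay of $u$ and boundedness of $v$ claimed in the theorem.

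Finally, to close the bootstrap I would combine the second observation with Cauchy-Schwarz on the length-$\delta$ interval:
\[
v(x,t) = v(a,t) + \int_a^x v_x(y,t)\,\mathd y \;\le\; -3 + \delta^{1/2}\|v_x\|_{L^2}(t) \;<\; -3 + \frac{1}{4} \;<\; -\frac{3}{2},
\]
which strictly improves the bootstrap for every $x \in I$ and forces $T^* = \infty$. The hardest step will be producing the sharp coefficient $\frac{1}{3}$ in the estimate for $\|v_x\|_{L^2}$: it forces the effective decay rate of $\|u_x\|_{L^2}$ to be essentially $3$, so that integrating $2\delta^{1/2}\|u_{0x}\|_{L^2}^2\,e^{-6t}$ over $[0,\infty)$ yields exactly $\frac{1}{3}\delta^{1/2}\|u_{0x}\|_{L^2}^2$. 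Preserving this decay rate despite the cross term $4\|u\|_{L^\infty}\|u_x\|_{L^2}\|v_x\|_{L^2}$ will require a tight application of Young's inequality, made possible by the smallness constant $1/4$ in the hypothesis.
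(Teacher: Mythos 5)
Your skeleton coincides with the paper's: preservation of $\mathrm{supp}\,u$, the sign of $H(u^2)$ at the left endpoint giving $v\le -3$ there for all time, the Poincar\'e bound $\|u\|_{L^\infty}\le \delta^{1/2}\|u_x\|_{L^2}$, the two $\dot H^1$ energy inequalities, the closing estimate $v\le -3+\delta^{1/2}\|v_x\|_{L^2}$, and a continuation/bootstrap argument. But there is a genuine gap exactly at the step you flag as ``the hardest'': with your bootstrap threshold $v\le -3/2$ the constants do not close, and no application of Young's inequality can repair it. Indeed, with $v\le-3/2$ the damping in the first inequality is only $-6$, so even granting $\delta^{1/2}\|v_x\|_{L^2}<1/4$ you get
\[
\frac{\mathd}{\mathd t}\|u_x\|_{L^2}^2\le\bigl(-6+4\delta^{1/2}\|v_x\|_{L^2}\bigr)\|u_x\|_{L^2}^2\le -5\|u_x\|_{L^2}^2,
\]
i.e.\ decay rate $5/2$, never $3$: the rate $-6$ minus any positive cross term is strictly worse than $-6$, and the Duhamel variant using the pointwise bound $\|u\|_{L^\infty}\le\delta^{1/2}\|u_{0x}\|_{L^2}e^{-3t}$ only yields $(1+t)e^{-3t}$, which is not $Ce^{-3t}$. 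Consequently integrating $\frac{\mathd}{\mathd t}\|v_x\|_{L^2}\le 2\|u\|_{L^\infty}\|u_x\|_{L^2}$ gives at best the constant $\tfrac{4}{11}\delta^{1/2}\|u_{0x}\|_{L^2}^2$, not $\tfrac13\delta^{1/2}\|u_{0x}\|_{L^2}^2$, so hypothesis \myref{cond-v0} no longer implies $\delta^{1/2}\|v_x\|_{L^2}<1/4$; and since that smallness was what you ``plugged back'' to control the cross term, the argument as written is circular (the $1/3$ needs rate $3$ for $\|u_x\|_{L^2}$, which needs the $\|v_x\|_{L^2}$ smallness, which needs the $1/3$). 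The exact rate $e^{-3t}$ is also part of the theorem's conclusion, so you cannot simply settle for $e^{-5t/2}$.

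The paper's calibration removes both problems at once. Take the maximal interval $[0,T)$ on which \emph{two} conditions hold simultaneously, $v<-2$ on $\mathrm{supp}(u)$ and $2\delta^{1/2}\|v_x\|_{L^2}<1$ (putting the $\|v_x\|_{L^2}$ bound into the continuation hypothesis eliminates the circularity); these hold initially because $v_0\le-3$ and $\delta^{1/2}\|v_{0x}\|_{L^2}<1/4$. With $v<-2$ the damping is $-8$, so
\[
\frac{\mathd}{\mathd t}\|u_x\|_{L^2}\le\bigl(-4+2\delta^{1/2}\|v_x\|_{L^2}\bigr)\|u_x\|_{L^2}<-3\|u_x\|_{L^2},
\]
which gives the clean $\|u_x\|_{L^2}\le\|u_{0x}\|_{L^2}e^{-3t}$, hence $\|v_x\|_{L^2}\le\|v_{0x}\|_{L^2}+\tfrac13\delta^{1/2}\|u_{0x}\|_{L^2}^2$, and then \myref{cond-v0} yields the strict improvements $v\le-3+\delta^{1/2}\|v_x\|_{L^2}\le-2.75$ and $2\delta^{1/2}\|v_x\|_{L^2}<1/2$, so $T=\infty$. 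In short: keep your structure, but raise the $v$-threshold to $-2$ and include the $\|v_x\|_{L^2}$ smallness in the bootstrap set. Finally, the theorem asserts full $H^1$ control, so you also need the (easy) $L^2$ estimates, e.g.\ $\|u\|_{L^2}\le\|u_0\|_{L^2}e^{-4t}$ from $v<-2$ and $\|v\|_{L^2}\le\|v_0\|_{L^2}+\tfrac16\delta\|u_{0x}\|_{L^2}^2$ from $v_t=Hu^2$; your proposal omits these.
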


In order to study the nature of the singularities, we have 
performed extensive numerical experiments for the nonlocal system
with or without viscosity. Our numerical study shows that 
$\|u\|_{L^\infty} (t)$ and $\|v\|_{L^\infty}(t)$ develop a
finite time blowup with a blowup rate 
$O\left ( \frac{1}{T-t} \right )$, which is qualitatively
similar to that of the 3D model \cite{HouLei09}.
Our numerical results also indicate that the solution of the 
inviscid nonlocal system seems to develop a one-parameter 
family self-similar finite time singularity of the type:
\begin{eqnarray}
\label{u-ss-0}
u(x,t) &= &\frac{1}{T-t}\,U\left(\xi,t\right),\\
\label{v-ss-0}
v(x,t) & = &\frac{1}{T-t}\,V\left(\xi,t\right),\\
\xi &= &\frac{x-x_0(t)}{(T-t)^{1/2}\log(1/(T-t))^{1/2}},
\label{Scaling}
\end{eqnarray}
where $x_0(t)$ is the position at which $u(x,t)$ achieves its
maximum. 
The parameter that characterizes this self-similar blowup
is the rescaled speed of propagation of the traveling wave
defined as follows: 
\[
\lambda=\lim_{t\rightarrow T}\left((T-t)^{1/2}\frac{d}{dt}x_0(t)\right).
\]
Different initial data give different speeds of
propagation of the singularity. One of the interesting
findings of our numerical study is that by rescaling the 
self-similar variable $\xi$ by $\lambda^{-1}$, the different 
rescaled profiles corresponding to different initial
conditions all collapse to the same universal profile. 
We offer some preliminary analysis to explain this 
phenomenon. 

Our numerical results also show that there is a significant overlap 
between the inner region of $U$ and the inner region of $V$ 
where $V$ is positive. Such overlap persists dynamically and is
responsible for producing a quadratic nonlinearity in
the right hand side of the $u$-equation. The nonlinear
interaction between $u$ and $v$ produces a traveling wave
that moves to the right\footnote{If we change the plus sign in 
front of the Hilbert transform in the nonlocal system \myref{model-full} 
to a minus sign, 
the nonlocal system would produce a traveling wave that moves 
to the left.}. Such phenomenon seems quite
generic, and is qualitatively similar to that of the 3D 
model \cite{HouLei09}. The only difference is that the
3D model produces traveling waves that move along the symmetry
axis in both directions. It is still a mystery why the 
inviscid nonlocal system selects the scaling (\ref{Scaling}) 
with the 1/2 exponent and a logarithmic correction.
With the logarithmic correction, the viscous term can not 
dominate the nonlinear term $2uv$ in the equation. Indeed, 
when we add viscosity to the nonlocal system, we find that the viscous
solution still develops the same type self-similar finite time
blowup as that of the inviscid nonlocal system.  


We remark that Hou, Shi and Wang \cite{HSW09} have recently
made some important progress in proving the formation of
finite time singularities of the original 3D model of Hou 
and Lei \cite{HouLei09} for a class of smooth initial 
conditions with finite energy under some appropriate
boundary conditions. The stabilizing effect of convection has 
been studied by Hou and Li in a recent paper \cite{HouLi07} via
a new 1D model. Formation of singularities for
various model equations for the 3D Euler equations
or the surface quasi-geostrophic equation
has been investigated by Constantin-Lax-Majda \cite{CLM85},
Constantin \cite{Constantin86}, 
DeGregorio \cite{DeGregorio90,DeGregorio96}, 
Okamoto and Ohkitani \cite{Ohkitani05},
Cordoba-Cordoba-Fontelos
\cite{CCF05}, Chae-Cordoba-Cordoba-Fontelos \cite{CCCF05},
and Li-Rodrigo \cite{LR09}.

The rest of the paper is organized as follows. In Section 2, 
we study some properties of the nonlocal system. In Section 3, 
we establish the local well-posedness of the nonlocal system. Section 4
is devoted to proving the finite time singularity formation of 
the inviscid nonlocal system for a large class of smooth initial data 
with finite energy. We prove the global regularity of the 
nonlocal system for a class of initial data in Section 5. Finally, 
we present several numerical results in Section 6 to study the 
nature of the finite time singularities for both the inviscid 
and viscous nonlocal systems.

\section{Properties of the nonlocal system}

In this section, we study some properties of the nonlocal system.
First of all, we note that the nonlocal system has some interesting
scaling property. Specifically, for any constants $\alpha$ and $\beta$
satisfying $\alpha \beta > 0$, the nonlocal system  
\begin{equation}
    u_t = \alpha uv, \hspace{2em} v_t = \beta H u^2
\end{equation}
is equivalent to the system 
  \begin{equation}
    \tilde{u}_t = 2 \tilde{u}  \tilde{v} , \hspace{2em} \tilde{v}_t = H
    \tilde{u}^2
  \end{equation}
by introducing the following rescaling of the solution:
\begin{equation}
    u = \tilde{u} \left( x, \gamma t \right), \hspace{2em} v = \mu
    \tilde{v} \left( x, \gamma t \right) ,
\end{equation}
where $\gamma$ and $\mu$ are related to $\alpha$ and $\beta$ through the 
following relationship:
  \begin{equation}
    \gamma = \sqrt{\frac{\alpha \beta }{2}} ,\quad
\mu = \mbox{sgn} (\alpha ) \sqrt{\frac{2\beta }{\alpha} }.
  \end{equation}
Therefore, it is sufficient to consider the nonlocal system in 
the following form:
  \begin{equation}
\label{uv-eqn}
    u_t = 2uv, \hspace{2em} v_t = H u^2 .
  \end{equation}

Moreover, if we replace the second equation $v_t = H u^2 $
by $v= H u^2$ and define $w = u^2$, then our nonlocal system is 
reduced to the well-known Constantin-Lax-Majda model \cite{CLM85}:
\begin{equation}
w_t = 4 w H w  .
\end{equation}

Before we end this section, we present the proof of 
Proposition \ref{prop hilbert}.

\noindent
\begin{proof}{\bf of Proposition \ref{prop hilbert}}.
Denote $\D\widetilde{f_\epsilon}(x)=\frac{1}{\pi}\int_{|x-y|\ge \epsilon}\frac{
f(y)}{x-y}\mathd y$,
$\D F_\epsilon(x)=\phi(x)f(x)\widetilde{f_\epsilon}(x)$ and $\D\bar
f(x)=\sup_{\epsilon\ge 0}|\widetilde{f_\epsilon}(x)|$. It follows
from the singular integral theory of Calderon-Zygmund \cite{CZ56}
that 
$\D\widetilde{f_\epsilon}(x)\rightarrow Hf(x)$ a.e. $x\in R^1$ and
\[
\|\bar f\|_{L^p}\le C_p\|f\|_{L^p}.
\]
Therefore, we have
$F_\epsilon(x)\rightarrow \phi(x)f(x)Hf(x)$ a.e. $x\in R^1$ and
$|F_\epsilon(x)|\le G(x),$ where $G(x)=|\phi(x)f(x)|\bar f(x)$
satisfies
\begin{eqnarray}\|G(x)\|_{L^1}&\le&\|\bar f(x)\|_{L^p}\|\phi(x)f(x)\|_{L^q}\nonumber\\
&\le&C_p\|f(x)\|_{L^p}\|\phi(x)f(x)\|_{L^q}<+\infty.
\end{eqnarray} 
Using the Lebesgue Dominated Convergence Theorem, we have
\begin{eqnarray}
\int \phi(x)f(x)H(f)\mathd x 
& = & \lim_{\epsilon\rightarrow 0} 
\int f(x)\phi(x) \D\widetilde{f_\epsilon}(x) dx \nonumber \\
&=& \frac{1}{\pi}\lim_{\epsilon\rightarrow 0} \int f(x)\phi(x)\int_{|x-y|\ge \epsilon} \frac{f(y)}{x-y}\mathd y\mathd x .
\label{eqn-P1}
\end{eqnarray}
Note that 
\begin{eqnarray*}
\int |f(y)| \left (\int_{|x-y|\ge \epsilon}
\frac{|f(x)\phi(x)|}{|x-y|}\mathd x \right)\mathd y  
&\leq & \int |f(y)| \left (\int
\frac{2|f(x)\phi(x)|}{\epsilon+|x-y|}\mathd x \right )\mathd y \\
&\leq & 2
\|\phi(x)f(x)\|_{L^q} \|(\epsilon+|x|)^{-1}\|_{L^p}
\int |f(y)|\mathd y \\
&= & C\|f(y) \|_{L^1} \|\phi(x)f(x)\|_{L^q} < \infty ,
\end{eqnarray*}
for each fixed $\epsilon >0$ since $f \in L^1$, $\phi f \in L^q$ by 
our assumption, and $C \equiv \|(\epsilon+|x|)^{-1}\|_{L^p} < \infty$ for $p>1$.
Thus Fubini's Theorem implies that
\begin{equation}
\frac{1}{\pi} \int f(x) \phi(x) \int_{|x-y|\ge \epsilon}
\frac{f(y)}{x-y}\mathd y \mathd x =   
\frac{1}{\pi} \int \int_{|x-y|\ge \epsilon} f(x) \phi(x) 
\frac{f(y)}{x-y}\mathd y \mathd x,
\label{eqn-P2}
\end{equation}
for each fixed $\epsilon > 0$.
Furthermore, by renaming the variables in the integration, we can
rewrite $1/2$ of the integral on the right hand side of \myref{eqn-P2}
as follows:
\[
\frac{1}{2\pi} \int\int_{|x-y|\ge \epsilon} f(x)f(y) \frac{\phi(x)}{x-y}
\mathd y\mathd x = - 
\frac{1}{2\pi} \int\int_{|x-y|\ge \epsilon} f(x)f(y) \frac{\phi(y)}{x-y}
\mathd x\mathd y ,
\]
which implies that
\begin{equation}
\frac{1}{\pi} \int\int_{|x-y|\ge \epsilon} f(x)f(y) \frac{\phi(x)}{x-y}
\mathd y\mathd x =
\frac{1}{2\pi} \int\int_{|x-y|\ge \epsilon} f(x)f(y) \frac{\phi(x)-\phi(y)}{x-y}
\mathd x\mathd y .
\label{eqn-P3}
\end{equation}
Since $f \in L^1(R)$ and $\phi(x)$ is globally
Lipschitz continuous on $R$, it is easy to show that 
\[
f(x) f(y) \frac{\phi(x)-\phi(y)}{x-y} \in L^1(R^2).
\]
Using the Lebesgue Dominated Convergence Theorem, we have
\begin{eqnarray}
\frac{1}{2\pi}\lim_{\epsilon\rightarrow 0} \int\int_{|x-y|\ge \epsilon} f(x)f(y) \frac{\phi(x)-\phi(y)}{x-y}\mathd x\mathd y
=\frac{1}{2\pi}\int\int f(x)f(y) \frac{\phi(x)-\phi(y)}{x-y}\mathd x\mathd y .
\label{eqn-P4}
\end{eqnarray}
Proposition \ref{prop hilbert} now follows from \myref{eqn-P1}-\myref{eqn-P4}.
\end{proof}

We remark that Proposition \ref{prop hilbert} is also valid for
periodic functions. Recall that for periodic functions 
(with period $2 \pi$) the Hilbert transform takes the form:
\begin{equation}
  \left( H f \right) \left( x \right) = \frac{1}{2 \pi}\mbox{P.V.}\int_0^{2 \pi}
  f \left( y \right)\cot \left( \frac{x - y}{2} \right) \mathd y.
\end{equation}
For the sake of completeness, we state the corresponding
result for periodic functions below:

\begin{proposition}
\label{prop hilbert-2}
Let $\phi$ be a periodic Lipschitz continuous function with period
$2 \pi$. 
For any periodic function $f$ with period $2 \pi$ satisfying
$f\in L^p([0, 2\pi])$ and $\phi f\in L^q([0,2 \pi])$
  with $\frac 1p+\frac 1q=1$, $1<p, \; q<+\infty$,
we have
\begin{equation}
\label{prop hil -2}
    \int_{0}^{2 \pi}\phi(x)f(x)Hf(x)\mathd x
    =\frac{1}{4\pi}\int_0^{2 \pi}\int_0^{2\pi}(\phi(x)-\phi(y)) \cot 
\left ( \frac{x-y}{2} \right )f(x)f(y)\mathd 
x\mathd y .
  \end{equation}
  \end{proposition}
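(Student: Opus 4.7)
The plan is to mirror the proof of Proposition \ref{prop hilbert} essentially verbatim, with $R^1$ replaced by the torus $[0, 2\pi)$ and the kernel $1/(\pi(x - y))$ replaced by $(1/(2\pi)) \cot((x - y)/2)$. Two structural features of the non-periodic argument carry over intact. First, the periodic Hilbert transform is a Calderon--Zygmund operator, so the truncated operators
\[
\widetilde{f_\epsilon}(x) = \frac{1}{2\pi} \int_{\epsilon \le |x - y| \le \pi} f(y) \cot\!\left(\frac{x - y}{2}\right) \mathd y
\]
(with $|x - y|$ interpreted as the toroidal distance) satisfy $\widetilde{f_\epsilon}(x) \to Hf(x)$ a.e.\ and the maximal function $\bar f := \sup_{\epsilon > 0} |\widetilde{f_\epsilon}|$ obeys $\|\bar f\|_{L^p} \le C_p \|f\|_{L^p}$. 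Second, $\cot((x - y)/2)$ is odd in $x - y$, so the symmetrization trick used to produce the $\phi(x) - \phi(y)$ factor still works. Since $[0, 2\pi]$ is bounded, $L^p$ automatically embeds into $L^1$, so the separate $L^1$ hypothesis appearing in Proposition \ref{prop hilbert} is unnecessary here.

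First I would apply the Lebesgue dominated convergence theorem, using the bound $|\phi f \widetilde{f_\epsilon}| \le |\phi f| \bar f \in L^1([0, 2\pi])$ (the integrability following from Holder's inequality applied to $\phi f \in L^q$ and $\bar f \in L^p$), to reduce the left-hand side to
\[
\lim_{\epsilon \to 0} \frac{1}{2\pi} \int_0^{2\pi} \int_{|x - y| \ge \epsilon} \phi(x) f(x) f(y) \cot\!\left(\frac{x - y}{2}\right) \mathd y \, \mathd x .
\]
For each fixed $\epsilon > 0$ the truncated kernel is uniformly bounded and $f \in L^1$, so Fubini applies immediately. Then swapping $x \leftrightarrow y$ and using $\cot((y - x)/2) = - \cot((x - y)/2)$ exchanges $\phi(x)$ with $-\phi(y)$; averaging the two expressions produces the symmetric integrand $(\phi(x) - \phi(y)) \cot((x - y)/2)$ and the prefactor $1/(4\pi)$.

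The last step is to remove the truncation by dominated convergence on the resulting double integral. Exactly as in the non-periodic proof, the Lipschitz continuity of $\phi$ together with the expansion $\cot(z/2) = 2/z + O(z)$ near $z = 0$ yields a uniform bound
\[
\left| (\phi(x) - \phi(y)) \cot\!\left( \frac{x - y}{2} \right) \right| \le C \|\phi'\|_{L^\infty} ,
\]
so the integrand is dominated by $C |f(x)| |f(y)| \in L^1([0, 2\pi]^2)$, and one can pass $\epsilon \to 0$ inside. The one genuinely new technical point, which I expect to be the only real obstacle, is the correct interpretation of ``$|x - y| \ge \epsilon$'' as a toroidal (rather than Euclidean) distance so as to avoid a spurious singularity at the corners $(0, 2\pi)$ and $(2\pi, 0)$ arising from the periodic extension of $\cot((x - y)/2)$; this is handled cleanly by viewing everything on the torus from the outset.
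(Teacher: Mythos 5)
Your proposal is correct and follows essentially the same route as the paper, which simply observes that the argument for Proposition \ref{prop hilbert} carries over verbatim once one notes the oddness of the kernel $\cot\left(\frac{x-y}{2}\right)$ (the point emphasized in the paper's remark), with the truncation, maximal-function bound, Fubini, symmetrization, and dominated convergence steps all adapted to the torus exactly as you describe. Your additional observations, that the boundedness of $[0,2\pi]$ makes the separate $L^1$ hypothesis superfluous and that the truncation should be taken in toroidal distance so the periodic Lipschitz bound controls $(\phi(x)-\phi(y))\cot\left(\frac{x-y}{2}\right)$ near the diagonal, are the right way to handle the only genuinely periodic subtleties.
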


The proof of Proposition \ref{prop hilbert-2} goes exactly the
same as the non-periodic case. We omit the proof Here.

\begin{remark}
As we see
in the proof of Proposition \ref{prop hilbert}, the key is
to use the oddness of the kernel in the Hilbert transform.
The same observation is still valid here:
\begin{eqnarray*}
&&\frac{1}{2 \pi}\int \int_{[0, 2 \pi]^2,\;|x-y| > \epsilon} 
f(x)f(y) \phi(x)
\cot \left ( \frac{x-y}{2} \right )
\mathd y\mathd x \\
&& = - 
\frac{1}{2 \pi}\int \int_{[0, 2 \pi]^2,\;|x-y| > \epsilon} 
 f(x)f(y) \phi(y)\cot \left ( \frac{x-y}{2} \right ) \mathd x\mathd y ,
\end{eqnarray*}
by renaming the variables in the integration.
\end{remark}


\section{Local well-posedness in $H^1$}
In this section, we will establish the local well-posedness in Sobolev space $H^1$.
\begin{theorem} (Local well-posedness)
\label{wellposedness}
For any $u_0, v_0 \in H^1$, there exists a finite time 
$T=T\left(\|u_0\|_{H^1},\|v_0\|_{H^1}\right)>0$ 
such that the nonlocal system \myref{model}-\myref{ini data} has a
unique smooth solution, $u,v \in C^1\left([0,T); H^1\right)$
for $ 0 \leq t \leq T$. Moreover, if $T$ is the first time at which 
the solution of the nonlocal system ceases to be regular 
in $H^1$ and $T < \infty$, then the solution must satisfy the 
following condition:
\begin{eqnarray}
\label{cond blowup}
\int_0^T \left(\|u\|_{L^\infty}+\|v\|_{L^\infty}\right)\mathd t=+\infty.
\end{eqnarray}
\end{theorem}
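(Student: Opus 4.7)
The strategy is standard for quasilinear/nonlocal evolution systems: build a local solution by iteration (or Galerkin approximation), close an $H^1$ energy estimate, and obtain the blowup criterion via a BKM-style continuation argument. The key tool throughout is the $L^p$-boundedness of the Hilbert transform (Calderon-Zygmund) for $1<p<\infty$ and the fact that $H$ commutes with $\partial_x$.

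To set up existence, I would iterate: given $(u^n,v^n)$, define $(u^{n+1},v^{n+1})$ by $u^{n+1}_t = 2 u^n v^n$, $v^{n+1}_t = H((u^n)^2)$ with the prescribed initial data, and show that the sequence is uniformly bounded in $C([0,T];H^1)$ and contracting in $C([0,T];L^2)$ for $T$ sufficiently small depending only on $\|u_0\|_{H^1}+\|v_0\|_{H^1}$. The core estimates are obtained by differentiating the system once. For the $L^2$ estimate,
\begin{equation*}
\tfrac{d}{dt}\|u\|_{L^2}^2 = 4\!\int u^2 v\,\mathd x \le 4\|v\|_{L^\infty}\|u\|_{L^2}^2,\qquad
\tfrac{d}{dt}\|v\|_{L^2}^2 = 2\!\int v\,H(u^2)\,\mathd x \le C\|u\|_{L^\infty}\|u\|_{L^2}\|v\|_{L^2},
\end{equation*}
using $\|H(u^2)\|_{L^2}\le C\|u^2\|_{L^2}\le C\|u\|_{L^\infty}\|u\|_{L^2}$. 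For the first derivative, differentiating the equations gives $u_{xt}=2u_x v+2u v_x$ and $v_{xt}=H((u^2)_x)=2H(u u_x)$, so
\begin{equation*}
\tfrac{d}{dt}\|u_x\|_{L^2}^2 \le 4\|v\|_{L^\infty}\|u_x\|_{L^2}^2+4\|u\|_{L^\infty}\|u_x\|_{L^2}\|v_x\|_{L^2},
\end{equation*}
\begin{equation*}
\tfrac{d}{dt}\|v_x\|_{L^2}^2 \le 4\|v_x\|_{L^2}\|H(u u_x)\|_{L^2}\le C\|u\|_{L^\infty}\|u_x\|_{L^2}\|v_x\|_{L^2}.
\end{equation*}
Summing, one obtains an inequality of the form
\begin{equation*}
\tfrac{d}{dt}E(t)\le C\bigl(\|u\|_{L^\infty}+\|v\|_{L^\infty}\bigr)E(t),\qquad E(t):=\|u\|_{H^1}^2+\|v\|_{H^1}^2,
\end{equation*}
after which the 1D Sobolev embedding $H^1\hookrightarrow L^\infty$ yields $\tfrac{d}{dt}E\le C\,E^{3/2}$. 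This gives a uniform existence time $T\sim (\|u_0\|_{H^1}+\|v_0\|_{H^1})^{-1}$ and, upon taking the limit of the iterates, produces a solution $(u,v)\in C([0,T];H^1)$, with $C^1$ regularity in time following directly from the evolution equations and $H\colon L^2\to L^2$.

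Uniqueness is established by the same type of estimate applied to the difference $(u_1-u_2,v_1-v_2)$ in $L^2$: writing $u_{1,t}-u_{2,t}=2(u_1-u_2)v_1+2u_2(v_1-v_2)$ and $v_{1,t}-v_{2,t}=H((u_1-u_2)(u_1+u_2))$, one bounds everything in terms of the already-controlled $L^\infty$ norms of $u_i,v_i$ and applies Gronwall. For the blowup criterion, suppose $T<\infty$ is the maximal existence time and $\int_0^T (\|u\|_{L^\infty}+\|v\|_{L^\infty})\,\mathd t<\infty$. Then Gronwall applied to the differential inequality $\tfrac{d}{dt}E\le C(\|u\|_{L^\infty}+\|v\|_{L^\infty})E$ gives $E(t)\le E(0)\exp\bigl(C\int_0^T(\|u\|_{L^\infty}+\|v\|_{L^\infty})\,\mathd t\bigr)<\infty$ on $[0,T)$. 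Since the local existence time depends only on $\|u_0\|_{H^1}+\|v_0\|_{H^1}$, the solution can be continued past $T$, contradicting maximality, and \myref{cond blowup} follows.

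The main technical point to watch is the derivative-loss question in the $v$-equation: a naive attempt to estimate $v$ directly from $v_t=H(u^2)$ at the $H^1$ level requires controlling $\partial_x H(u^2)=2H(uu_x)$ in $L^2$, which is exactly where the Hilbert transform's $L^2$-boundedness and the 1D embedding $H^1\hookrightarrow L^\infty$ combine to keep the estimate closed without losing a derivative. Once this is observed, no further subtlety arises, and the rest is a standard contraction / continuation argument.
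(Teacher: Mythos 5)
Your proposal is correct, and the a priori estimates you use for the blowup criterion are exactly those in the paper: the same $L^2$ bounds on $u,v,u_x,v_x$, the same use of $\partial_x H(u^2)=2H(uu_x)$ together with the $L^2$-boundedness of $H$, the resulting inequality $\frac{d}{dt}\bigl(\|u\|_{H^1}^2+\|v\|_{H^1}^2\bigr)\le C\bigl(\|u\|_{L^\infty}+\|v\|_{L^\infty}\bigr)\bigl(\|u\|_{H^1}^2+\|v\|_{H^1}^2\bigr)$, and Gronwall plus continuation. Where you diverge is the existence step: the paper does not run a Picard iteration by hand. It observes that the system is an ODE $U_t=F(U)$ in the Banach space $X=H^1\times H^1$, that $F(U)=(2uv,H(u^2))$ maps $X$ into $X$ because $H^1(\mathbb{R})$ is an algebra and $H$ is bounded on $H^1$, and that $F$ is locally Lipschitz on $X$; local existence, uniqueness, and $C^1$-in-time regularity then follow at once from the abstract Cauchy--Lipschitz/Picard theorem for ODEs in Banach spaces (Theorem 4.1 of Majda--Bertozzi). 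Your iteration with uniform $H^1$ bounds and an $L^2$ contraction is in substance the proof of that abstract theorem specialized to this $F$, so it is a legitimate, more self-contained (if longer) route; the paper's formulation buys brevity and makes transparent that there is no derivative loss at all -- the system is genuinely an ODE in $H^1\times H^1$, so even your ``main technical point'' about $H(uu_x)$ is absorbed into the statement that $F$ is locally Lipschitz. One minor remark: with the abstract ODE viewpoint the local existence time automatically depends only on $\|u_0\|_{H^1}$ and $\|v_0\|_{H^1}$, which is precisely what your continuation argument needs, so the two halves of your proof fit together exactly as in the paper.
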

\begin{remark}
We remark that the condition (\ref{cond blowup}) is an analogue 
of the well-known Beale-Kato-Majda blowup criteria for the
3D incompressible Euler equation \cite{BKM84}.
\end{remark}
\begin{proof}
To show local well-posedness, we write the system as an ODE in the Banach space $X:=H^1 \times H^1$: 
\begin{equation}
U_t = F(U),
\end{equation}
where $U = (u,v)$, $F(U) = (2 u v, H(u^2))$. As $H^1(\mathbb{R})$ is an algebra, $F$ maps any open set in $X$ into $X$, and furthermore $F$ is locally Lipschitz on $X$. Local well-posedness of \myref{model}-\myref{ini data} then follows from the standard abstract ODE theory such as Theorem 4.1 in \cite{MB02}. 

The blow-up criterion (\ref{cond blowup}) follows from the following a priori estimates.
Multiplying the $u$-equation by $u$ and the $v$-equation by $v$,
and integrating over $R$, we obtain
  \begin{equation}
    \frac{\mathd}{\mathd t} \int u^2 \mathd x = 4\int u^2 v \mathd x\le 4 \|v\|_{L^{\infty}}
    \int u^2\mathd x ,
  \end{equation}
and
  \begin{eqnarray}
    \frac{\mathd}{\mathd t} \int v^2\mathd x &=& 2 \int vH u^2\mathd x = - 2 \int \left( H v
    \right) u^2\mathd x \le 2\|u\|_{L^{\infty}}  \int \left| H v \right| u\mathd x\nonumber\\
    &\le& \|u\|_{L^{\infty}}  \left( \int u^2 \mathd x+ \int v^2 \mathd x\right) .
  \end{eqnarray}
  
Similarly, we can derive $L^2$ estimates for $u_x$ and $v_x$ as follows:
  \begin{eqnarray}
    \frac{\mathd}{\mathd t} \int u_x^2\mathd x & = & 4 \int (vu_x^2 + uv_x u_x)\mathd x
    \nonumber\\
    & \le & 4 \|v\|_{L^{\infty}}  \int u_x^2\mathd x + 2
    \|u\|_{L^{\infty}}  \int (u_x^2 + v_x^2) \mathd x , 
  \end{eqnarray}
and
  \begin{eqnarray}
    \frac{\mathd}{\mathd t} \int v_x^2 \mathd x& = & 4 \int v_x H \left( uu_x \right)\mathd x
    \nonumber\\
    & = & 4 \int \left( H v_x \right) uu_x \mathd x\nonumber\\
    & \le & 2\|u\|_{L^{\infty}}  \int (u_x^2 + v_x^2) \mathd x. 
  \end{eqnarray}
Summing up the above estimates gives
  \begin{equation}
\label{max-estimate}
    \frac{\mathd}{\mathd t} \left( \|u\|_{H^1}^2 +\|v\|_{H^1}^2 \right)
    \le C \left( \|u\|_{L^{\infty}} +\|v\|_{L^{\infty}} \right)  \left(
    \|u\|_{H^1}^2 +\|v\|_{H^1}^2 \right) .
  \end{equation}
We see that the regularity is controlled by the quantity
\begin{equation}
  \|u\|_{L^{\infty}} +\|v\|_{L^{\infty}} .
\end{equation}
If $\int_0^T (\|u\|_{L^{\infty}} +\|v\|_{L^{\infty}} ) dt < \infty$, 
then it follows from (\ref{max-estimate}) that $\|u\|_{H^1} +\|v\|_{H^1}$
must remain finite up to $T$. Therefore, if $T$ is the first time
at which the solution blows up in the $H^1$-norm, we must have
\begin{eqnarray}
\int_0^T \left(\|u\|_{L^\infty}+\|v\|_{L^\infty}\right)\mathd t=+\infty.
\end{eqnarray}
\end{proof}
\section{Blow up of the nonlocal system}

In this section, we will prove the main result of this paper,
that is the solution of the nonlocal system will develop a finite time
singularity for a class of smooth initial conditions with finite
energy. We will prove the finite time singularity of the nonlocal system
as an initial value problem in the whole space and 
in a periodic domain.

\subsection{Initial Data with Compact Support}

We first consider the initial value problem in the whole space
and prove the finite time blow up of the solution of
the nonlocal system \myref{model}-\myref{ini data} for a large
class of initial data $u_0$ that have compact support. 

For the sake of completeness, we will 
restate the main result below:
\begin{theorem}
\label{blowup compact}
Assume that the support of $u_0$ is contained in $(a,b)$ and
that $u_0, \; v_0 \in H^1$. Let $\phi(x)=x-a$ and
\begin{eqnarray}
C=4 \int_a^b\phi(x)u_0^2\,v_0\,\mathd x,\quad I_\infty=\int_0^{+\infty} \frac{\mathd y}{\sqrt{y^3+1}}.\nonumber
\end{eqnarray}
If $C>0$, then the solution of the nonlocal system \myref{model}-\myref{ini data} 
must develop a finite time singularity in the $H^1$ norm no later than
$\D T^*=\left(\frac{4C}{3\pi(b-a)^2}\right)^{-1/3}I_\infty$.
\end{theorem}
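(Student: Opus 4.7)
The plan is to track the evolution of a carefully chosen weighted second moment of $u$. Define $J(t) = \int \phi(x)\, u^2(x,t)\, \mathd x$ with $\phi(x) = x - a$. First I would observe that $u_t = 2uv$ is, for each fixed $x$, a linear ODE $\dot u = 2v(x,t) u$, so uniqueness of the ODE implies that $u(\cdot,t)$ stays supported in $[a,b]$ as long as the solution remains smooth. In particular, $0 \le \phi \le b-a$ on the support of $u$, so $\phi u^2 \ge 0$ pointwise in every integral below.

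Differentiating once gives $J'(t) = 4\int \phi u^2 v\, \mathd x =: K(t)$, so $K(0) = C$. A second differentiation, using both PDEs of the system, yields
\[
K'(t) = 16 \int \phi u^2 v^2\, \mathd x + 4\int \phi u^2 H(u^2)\, \mathd x .
\]
The crucial step is to apply Proposition \ref{prop hilbert} to the second term with $f = u^2$: the choice $\phi(x) = x - a$ is tailored so that $\phi(x)-\phi(y) = x-y$, which cancels the Hilbert kernel and reduces the double integral to the clean expression $\frac{1}{2\pi}\|u\|_{L^2}^4$. Dropping the non-negative first term (using $\phi \ge 0$ on the support of $u$) then yields
\[
K'(t) \ge \frac{2}{\pi}\|u\|_{L^2}^4 \ge \frac{2}{\pi(b-a)^2}\, J(t)^2 ,
\]
where the last step uses $J \le (b-a)\|u\|_{L^2}^2$.

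Next I would convert the inequality $J'' \ge A J^2$, with $A = 2/[\pi(b-a)^2]$, $J(0) \ge 0$, and $J'(0) = C > 0$, into the stated finite-time blow-up. Since $K' \ge 0$, $K$ stays positive throughout; multiplying $K' \ge A J^2$ by $2K = 2 J'$ and integrating from $0$ to $t$ gives $K^2 \ge C^2 + \frac{2A}{3}(J^3 - J(0)^3)$. Setting $\tilde J = J - J(0) \ge 0$ and using $J \ge \tilde J$ leads to
\[
\tilde J'(t) \ge \sqrt{\frac{2A}{3}}\, \sqrt{\tilde J(t)^3 + B}, \qquad B = \frac{3C^2}{2A} .
\]
Separating variables and rescaling by $\tilde J = B^{1/3} y$ converts this into $\int_0^{\tilde J(t)/B^{1/3}} \mathd y/\sqrt{y^3+1} \ge \sqrt{2A B^{1/3}/3}\, t$. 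Since the left side is bounded above by $I_\infty$, the blow-up time $T^*$ must satisfy $\sqrt{2A B^{1/3}/3}\, T^* \le I_\infty$, and a direct computation of the constants gives precisely $T^* = \left(4C/(3\pi(b-a)^2)\right)^{-1/3} I_\infty$. Because $J(t) \to \infty$ forces $\|u(\cdot,t)\|_{L^2}^2 \ge J(t)/(b-a) \to \infty$, the $H^1$ norm of the solution must cease to be bounded by time $T^*$ at the latest.

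The main obstacle is the Hilbert-transform term in $K'$: without the algebraic cancellation produced by choosing $\phi$ affine, one is left with a non-local quantity of indeterminate sign. Two technical points require verification. First, Proposition \ref{prop hilbert} is applicable with $p = q = 2$: since $u \in H^1$ embeds into $L^\infty$ and $u$ has compact support, one has $u^2 \in L^1 \cap L^2$ and $\phi u^2 \in L^2$. Second, discarding the non-negative term $16\int \phi u^2 v^2\, \mathd x$ must not spoil the sharpness of $T^*$; the sign condition $\phi \ge 0$ on the support of $u$, which comes for free from the compact support of $u_0$, is precisely what legitimizes this trade-off and makes the bound on $T^*$ clean.
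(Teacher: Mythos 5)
Your proposal is correct and follows essentially the same route as the paper's proof: the weighted moment $\int\phi u^2\,\mathd x$ with $\phi=x-a$, the cancellation via Proposition \ref{prop hilbert}, the resulting inequality $F_{tt}\ge \frac{2}{\pi(b-a)^2}F^2$, and the same separation-of-variables bound yielding $T^*$. Your handling of the reduction to $\tilde J$ (via $J^3-J(0)^3\ge\tilde J^3$) and your explicit checks of support preservation and the $p=q=2$ hypotheses of the proposition are minor, harmless refinements of the paper's argument rather than a different approach.
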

\begin{proof} 
By Theorem \ref{wellposedness}, we know that there exists a finite 
time $T=T\left(\|u_0\|_{H^1},\|v_0\|_{H^1}\right)>0$
such that the nonlocal system \myref{model}-\myref{ini data} has a
unique smooth solution, $u,v \in C^1\left([0,T); H^1\right)$
for $ 0 \leq t < T$. 
Let $T^*$ be the largest time such that the nonlocal system with initial 
condition $u_0$ and $v_0$ has a smooth solution in $H^1$. 
We claim that $T^* < \infty$. We prove this by contradiction. 

Suppose that $T^* = \infty$, i.e. that the nonlocal system has a globally
smooth solution in $H^1$ for the given initial condition $u_0$ and $v_0$. 
Using \myref{model}, we obtain
\begin{eqnarray}
(u^2)_{tt}=4 (u^2v)_t=8u_tuv+4 u^2v_t=4 (u_t)^2+ 4 u^2H(u^2) .
\end{eqnarray}
Multiplying $\phi(x)$ to both sides of the above equation and
integrating over $[a,b]$, 
we have the following estimate:
\begin{eqnarray}
\label{ori ineq}
\frac{d^2}{dt^2}\int_a^b\phi(x)u^2(x,t)\mathd x&=& 4
\int_a^b\phi(x)(u_t)^2\mathd x+ 4 \int_a^b\phi(x)u^2H(u^2)\mathd x\nonumber\\
&\ge&  4 \int_a^b\phi(x)u^2H(u^2)\mathd x \; .
\end{eqnarray}
Note that the support of $u(x,t)$ is the same as that of 
the initial value $u_0$. 
Proposition \ref{prop hilbert} implies that 
\begin{eqnarray}
\label{ori ineq-1}
\int_a^b\phi(x)u^2H(u^2)\mathd x &= & \int_{-\infty}^{\infty}
\phi(x)u^2H(u^2)\mathd x \nonumber\\
&=&\frac{1}{2\pi}\int_{-\infty}^{\infty}\int_{-\infty}^{\infty}
u^2(x,t)u^2(y,t)\frac{\phi(x)-\phi(y)}{x-y}\mathd x\mathd y\nonumber\\
&=&\frac{1}{2\pi}\left(\int_a^bu^2(x,t)\mathd x\right)^2 \; .
\end{eqnarray}
Combining \myref{ori ineq} with \myref{ori ineq-1}, we get
\begin{eqnarray}
\label{ori ineq-2}
\frac{d^2}{dt^2}\int_a^b\phi(x)u^2(x,t)\mathd x
\geq \frac{2}{\pi}\left(\int_a^bu^2(x,t)\mathd x\right)^2 .
\end{eqnarray}
As we can see, Proposition \ref{prop hilbert} plays an essential role
in obtaining the above inequality, which is the key estimate in
our analysis of the finite time singularity of the nonlocal system.

By the definition of $\phi$, we have the following inequality:
\begin{eqnarray}
\label{eqn-phi}
\int_a^bu^2(x,t)dx&\ge&\frac{1}{b-a}\int_a^b\phi(x)u^2(x,t)\mathd x .
\end{eqnarray}
Combining \myref{ori ineq-2} with \myref{eqn-phi},  
we obtain the following key estimate:
\begin{eqnarray}
\frac{d^2}{dt^2}\int_a^b\phi(x)u^2(x,t)\mathd x
\geq \frac{2}{\pi (b-a)^2} \left(\int_a^b\phi(x)u^2(x,t)\mathd x\right)^2
\end{eqnarray}
Denoting $F(t) = \int_a^b\phi(x)u^2(x,t)\mathd x$ we obtain the ODE inequality system
\begin{equation}
\label{eqn-Ftt}
F_{tt} \geq \frac{2}{\pi (b-a)^2} F^2,\quad F_t(0) = C>0,\quad F(0)=\int_a^b \phi u_0^2 > 0.
\end{equation}
Since $F_t(0) = C >0$, integrating (\ref{eqn-Ftt}) from 0 to $t$ gives
$F_t(t) > 0$ for all $t \geq 0$. Denote
$\widetilde{F}(t) \equiv F(t) - F(0)$. Then we have 
$\widetilde{F}(t) \geq 0 $ for $t\geq 0$, $\widetilde{F}_t > 0$ and 
$\widetilde{F} (0) =0$. Since $F(0) > 0$ and
$\widetilde{F}(t) \geq 0 $, it is easy to show 
that $\widetilde{F}$ satisfies the same differential inequality
(\ref{eqn-Ftt}) as $F$. Therefore we can set $F(0)=0$ in the 
following analysis without loss of generality. 

Multiplying 
$F_t$ to $F_{tt} \geq \frac{2}{\pi (b-a)^2} F^2$ and integrating in time, we obtain
\begin{eqnarray}
\label{dF}
\frac{dF}{dt}\geq \sqrt{\frac{4}{3\pi(b-a)^2}F^3+C^2}.
\end{eqnarray}
It is easy to see from the above inequality that $F$ must blow
up in a finite time. Define 
\begin{eqnarray*}
I(x)=\int_0^x \frac{\mathd y}{\sqrt{y^3+1}},\quad 
J=\left( \frac{3\pi(b-a)^2 C^2}{4} \right)^{1/3} .
\end{eqnarray*}
Integrating \myref{dF} in time gives
\begin{eqnarray}
\label{blowup-I}
I\left(\frac{F(t)}{J}\right)\geq \frac{Ct}{J}.
\end{eqnarray}
Observe that both $I$ and $F$ are strictly increasing functions,
and $I(x)$ is bounded for all $x > 0$ while the right hand
side of \myref{blowup-I} increases linearly in time.
It follows from \myref{blowup-I} that $F(t)$ must blow up no later than
\begin{eqnarray}
\label{eqn-T-comp}
T^*=\frac{J}{C}I_\infty=\left(\frac{4C}{3\pi(b-a)^2}\right)^{-1/3}I_\infty.
\end{eqnarray}
This contradicts with the assumption that the nonlocal system has a
globally smooth solution for the given initial condition $u_0$
and $v_0$. This contradiction implies that the solution of
the nonlocal system \myref{model}-\myref{ini data} must develop 
a finite time singularity in the $H^1$ norm no later than 
$T^*$ given by \myref{eqn-T-comp}.
This completes our proof of Theorem \ref{blowup compact}.
\end{proof}

\subsection{Periodic Initial Data}

In this subsection, we will extend the analysis of finite time
singularity formation of the nonlocal system to periodic initial data.
Below we state our main result:
\begin{theorem}
\label{blowup periodic}
We assume that the initial values $u_0, v_0$ are periodic functions 
with period $2\pi$ and  the support of $u_0$ is contained in 
$(a,b) \subset (0,2\pi)$ with $b-a < \pi$. Moreover, we assume that 
$u_0, \; v_0 \in H^1[0, 2\pi]$. Let $\phi (x)$ be a $2\pi$-periodic
Lipschitz continuous function with $\phi(x)=x-a$ on $[a,b]$,  and
\begin{eqnarray}
C=4 \int_a^b\phi(x)u_0^2\,v_0\,\mathd x,\quad I_\infty=\int_0^{+\infty} \frac{\mathd y}{\sqrt{y^3+1}}.\nonumber
\end{eqnarray}
If $C>0$, 
then the solution of the nonlocal system \myref{model}-\myref{ini data} 
must develop a finite time singularity in the $H^1$ norm no later
than $\D T^*=\left(\frac{4C\cos (\frac{b-a}{2} )}{3\pi(b-a)^2}\right)^{-1/3}I_\infty$.
\end{theorem}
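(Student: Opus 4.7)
The proof will follow the same architecture as the compact-support case (Theorem \ref{blowup compact}): assume for contradiction that the solution exists globally in $H^1$, define $F(t) := \int_a^b \phi(x) u^2(x,t)\,\mathd x$, and derive a differential inequality of the form $F_{tt} \geq c\,F^2$ with $F_t(0) = C > 0$ that forces finite-time blowup of $F$, contradicting $H^1$-regularity. As in the non-periodic proof, using $u_t = 2uv$ and $v_t = H(u^2)$ I compute $(u^2)_{tt} = 4(u_t)^2 + 4 u^2 H(u^2)$, multiply by $\phi(x)$, and integrate over one period $[0,2\pi]$ to obtain
\[
F_{tt}(t) \geq 4 \int_0^{2\pi} \phi(x)\, u^2(x,t)\, H(u^2)(x,t)\,\mathd x.
\]

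The new ingredient is to invoke Proposition \ref{prop hilbert-2} with $f = u^2$. Since the transport equation $u_t = 2uv$ yields $u(x,t) = u_0(x)\exp\bigl(2\int_0^t v(x,s)\,\mathd s\bigr)$, the support of $u(\cdot,t)$ stays in $(a,b)$ for as long as the solution exists. The integrand thus vanishes off $[a,b]^2$, and using $\phi(x) - \phi(y) = x - y$ on this square I obtain
\[
\int_0^{2\pi} \phi\, u^2\, H(u^2)\,\mathd x = \frac{1}{4\pi}\int_a^b\!\!\int_a^b u^2(x)\,u^2(y)\,(x-y)\cot\!\left(\frac{x-y}{2}\right)\mathd x\, \mathd y.
\]

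The central new estimate is a uniform lower bound on the kernel. Writing $z = (x-y)/2$, the hypothesis $b - a < \pi$ forces $|z| < \pi/2$, so $\sin z$ keeps the sign of $z$ and $\cos z > 0$. Using $|\sin z| \leq |z|$ and monotonicity of $\cos$ on $[0,\pi/2]$,
\[
(x-y)\cot\!\left(\frac{x-y}{2}\right) = 2 z \cot z = \frac{2 z}{\sin z}\cos z \geq 2\cos z \geq 2\cos\!\left(\frac{b-a}{2}\right).
\]
Combining this with $\int_a^b u^2\,\mathd x \geq (b-a)^{-1}\int_a^b \phi u^2\,\mathd x = F/(b-a)$ produces
\[
F_{tt}(t) \geq \frac{2\cos((b-a)/2)}{\pi(b-a)^2}\, F(t)^2, \qquad F_t(0) = C > 0.
\]

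From this point the argument is structurally identical to the compact-support case: $F_t$ remains positive, one reduces to $F(0)=0$ without loss of generality, multiplying by $F_t$ and integrating yields
\[
F_t \geq \sqrt{\frac{4\cos((b-a)/2)}{3\pi(b-a)^2}F^3 + C^2},
\]
and a second separation of variables gives the stated blowup time $T^* = \bigl(\frac{4C\cos((b-a)/2)}{3\pi(b-a)^2}\bigr)^{-1/3} I_\infty$. The only genuine obstacle is the kernel bound in the third step: the hypothesis $b - a < \pi$ is exactly what prevents $\cot((x-y)/2)$ from changing sign on the support of $u^2(x)u^2(y)$ and delivers a strictly positive uniform constant. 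Everything else—support preservation under the transport equation, the $\phi$-weighted comparison between $\int u^2$ and $F$, and the ODE blowup mechanism—is a direct adaptation of the non-periodic proof.
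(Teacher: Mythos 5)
Your proposal is correct and follows essentially the same route as the paper's proof: contradiction via the weighted quantity $F(t)=\int_a^b\phi\,u^2\,\mathd x$, Proposition \ref{prop hilbert-2} applied to $f=u^2$ with support preservation reducing the double integral to $[a,b]^2$, the kernel bound $(x-y)\cot\bigl(\tfrac{x-y}{2}\bigr)\ge 2\cos\bigl(\tfrac{b-a}{2}\bigr)$ enabled by $b-a<\pi$, and the same ODE inequality mechanism yielding the stated $T^*$. The constants all check out, so no changes are needed.
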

\begin{proof} 
As in the proof of Theorem \ref{blowup compact}, we also prove this
theorem by contradiction. Assume that the nonlocal system with the
given initial condition $u_0$ and $v_0$ has a globally smooth 
solution in $H^1$.  
As before, by differentiating \myref{model} with respect to $t$, 
we obtain the following equation:
\begin{eqnarray}
(u^2)_{tt}=4(u_t)^2+4 u^2H(u^2) .
\end{eqnarray}
Multiplying $\phi(x)$ to both sides of the above equation,
integrating over $[0,2\pi]$ and using Proposition \ref{prop hilbert-2},
we obtain the following estimate:
\begin{eqnarray}
\frac{d^2}{dt^2}\int_a^{b}\phi(x)u^2(x,t)\mathd x&=&\frac{d^2}{dt^2}\int_0^{2\pi}\phi(x)u^2(x,t)\mathd x\nonumber\\&=&4\int_0^{2\pi}\phi(x)(u_t)^2\mathd x+4 \int_0^{2\pi}\phi(x)u^2H(u^2)\mathd x\nonumber\\
&\ge& 4\int_0^{2\pi}\phi(x)u^2H(u^2)\mathd x\nonumber\\
&=&\frac{1}{\pi}\int_0^{2\pi}\int_0^{2\pi}u^2(x,t)u^2(y,t)(\phi(x)-\phi(y))
\cot \left (\frac{x-y}{2} \right ) \mathd y\mathd x\nonumber\\
&=&\frac{1}{\pi}\int_a^{b}\int_a^{b}u^2(x,t)u^2(y,t)(x-y)\cot\left (
\frac{x-y}{2} \right ) \mathd y\mathd x\nonumber\\
&\geq&\frac{M}{\pi}\left(\int_a^{b}u^2(x,t)\mathd x\right)^2,
\label{key-inequ-period}
\end{eqnarray}
where $\D M=\min_{-(b-a)\le x\le b-a} x\cot (x/2)$. Since $b-a<\pi$, 
we have 
\begin{eqnarray}
M=\min_{-(b-a)\le x\le b-a} \frac{x\cos (x/2)}{\sin (x/2)}\ge \min_{-(b-a)\le x\le b-a} 2\cos (x/2)=2\cos \left ( \frac{b-a}{2} \right ) >0.
\end{eqnarray}
Now, following the same procedure as in the proof of Theorem 
\ref{blowup compact}, we conclude that the solution must blow up 
no later than
\begin{eqnarray}
\label{eqn-T-period}
T^*=\left(\frac{4MC}{6\pi(b-a)^2}\right)^{-1/3}I_\infty\le \left(\frac{4C\cos \frac{b-a}{2} }{3\pi(b-a)^2}\right)^{-1/3}I_\infty.
\end{eqnarray}
This contradicts with the assumption that the nonlocal system with the
given initial condition $u_0$ and $v_0$ has a globally smooth
solution. This contradiction implies that 
the solution of the nonlocal system \myref{model}-\myref{ini data}
must develop a finite time singularity in the $H^1$ norm
no later than $T^*$ given by \myref{eqn-T-period}.
This completes the proof of Theorem \ref{blowup periodic}.
\end{proof}

\begin{remark}
We can also prove the finite time blowup of a variant of
our nonlocal system
\begin{eqnarray}
\label{model-2}
u_t=2uv, \quad v_t=-H(u^2),
\end{eqnarray}
by choosing the test function $\phi(x)=b-x$.
It is interesting to note that while the solution of
\myref{model} produces traveling waves that propagate
to the right, the solution of \myref{model-2} 
produces traveling waves that propagate to the left.
\end{remark}

\begin{remark}
Our singularity analysis can be generalized to give 
another proof of finite time singularity formation of
the Constantin-Lax-Majda model without using the exact
integrability of the model. More precisely, we consider
the Constantin-Lax-Majda model:
\begin{eqnarray}
\label{CLM}
\left\{\begin{array}{ll}
u_t=uH(u),& \\
u(t=0)=u_0(x),& x\in \Omega.
\end{array}
\right.
\end{eqnarray}
By choosing $\phi(x)=x-a$ and following the same procedure
as in the proof of Theorem \ref{blowup compact}, we 
can show that if $u_0$ is smooth and
has compact support, $\mbox{supp}\;u_0=[a,b]$ 
and $u_0(x)>0$ on $(a,b)$, then the $L_1$ norm of the
solution of \myref{CLM} must blows up no later than 
\begin{eqnarray}
T^*=\frac{2\pi(b-a)^2}{\D \int_a^b\phi(x)u_0\,\mathd x} \; .
\end{eqnarray}
Below we will give a different and simpler proof of the finite 
time blowup for the Constantin-Lax-Majda model.

Multiplying $\phi(x)$ to both sides of equation 
\myref{CLM},
integrating over the support $(a,b)$, and using Proposition
\ref{prop hilbert}, we obtain
\begin{equation}
\frac{\mathd}{\mathd t}\int_a^b (x-a)u\mathd x=\int_a^b (x-a)uH(u)\mathd x=\frac{1}{2\pi}\left(\int_a^b u\mathd x\right)^2.
\end{equation}
As $ \int_a^b (x-a) u \mathd x \leq (b-a) \int_a^b u  \mathd x$ due to $ u \geq 0$ for $ x \in [a,b]$, setting $F(t) = \int_a^b (x-a) u \mathd x$ we have
\begin{equation}
F_t  \geq \frac{1}{2 \pi (b-a)^2} F^2,\quad F(0) = \int_a^b \phi(x) u_0 \mathd x > 0.
\end{equation}
This leads to 
\begin{equation}
F(t) \geq \frac{F(0)}{1 - tF(0)/2 \pi (b-a)^2 } ,
\end{equation}
which implies the finite-time blowup of $F$ 
no later than $T^*=\frac{2\pi(b-a)^2}{\int_a^b\phi(x)u_0\,\mathd x}$.

Similar result can be obtained for periodic initial data following the
same analysis of Theorem \ref{blowup periodic}.
\end{remark}

\section{Global regularity for a special class of initial data}

In this section, we will prove the global regularity of the solution
of our nonlocal system for a special class of initial data.
Below we state our main result in this section.
\begin{theorem}
\label{Global existence}
Assume that $u_0, \; v_0 \in H^1$. Further we assume that
$u_0$ has compact support in an interval of size $\delta$ 
and $v_0$ satisfies the condition
$v_0 \le - 3$ on this interval. Then the $H^1$ norm of the
solution of the nonlocal system \myref{model}-\myref{ini data}
remains bounded for all time as long as the following holds
  \begin{equation}
\label{cond-v0}
    \delta^{1 / 2}  \left( \|v_{0 x} \|_{L^2} + \frac{1}{3} \delta^{1 / 2} \|u_{0 x}
    \|_{L^2}^2 \right) < \frac{1}{4} .
  \end{equation}
  Moreover, we have $\|u \|_{L^\infty} \le Ce^{-3 t}$, 
$\| u \|_{H^1} \le C e^{-3 t}$, and $\| v \|_{H^1} \le C$
for some constant $C$ which depends on $u_0$, $v_0$, and $\delta$ only. 
\end{theorem}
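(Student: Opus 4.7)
The plan is a continuation/bootstrap argument based on the observation that if $v(x,t) \le -3/2$ persists on the (time-invariant) support of $u$, then $u$ decays exponentially and $v$ cannot drift far from $v_0$. First, because $u_t = 2uv$ is a pointwise linear ODE in $u$, the support of $u(\cdot,t)$ stays contained in $I := \mathrm{supp}(u_0)$, an interval of length $\delta$, as long as the solution is smooth (and the same is true of $u_x$). Define $T^{\ast}$ as the supremum of times $T$ such that $v(x,t) \le -3/2$ on $I$ for all $t \in [0,T]$. Since $v_0 \le -3$ on $I$, continuity gives $T^{\ast} > 0$; the goal is $T^{\ast} = \infty$.

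For $t \in [0, T^{\ast})$ the bootstrap hypothesis gives $(u^2)_t = 4u^2 v \le -6 u^2$ pointwise on $I$, hence $|u(x,t)| \le |u_0(x)| e^{-3t}$, and in particular $\|u\|_{L^\infty} \le \|u_0\|_{L^\infty} e^{-3t}$ together with $\|u\|_{L^2} \le \|u_0\|_{L^2}e^{-3t}$. I would then run coupled energy estimates on $u_x$ and $v_x$ along the lines of Section 3, but using $v \le -3/2$ on $\mathrm{supp}(u_x) \subset I$ to generate a good dissipative term:
\[
\frac{d}{dt}\|u_x\|_{L^2}^2 \le -6\|u_x\|_{L^2}^2 + 4\|u\|_{L^\infty}\|u_x\|_{L^2}\|v_x\|_{L^2}, \qquad \frac{d}{dt}\|v_x\|_{L^2}^2 \le 4\|u\|_{L^\infty}\|v_x\|_{L^2}\|u_x\|_{L^2}.
\]
Since $\int_0^\infty \|u\|_{L^\infty}\,ds < \infty$, a Gr\"onwall argument on $\|u_x\|_{L^2}^2 + \|v_x\|_{L^2}^2$ then gives a uniform-in-time bound, and the first inequality even yields $\|u_x\|_{L^2} \le C e^{-3t}$ once $\|v_x\|_{L^2}$ is controlled.

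The bootstrap is closed by estimating the pointwise change in $v$ on $I$. Since $v_t = H(u^2)$,
\[
v(x,t) - v_0(x) = \int_0^t H(u^2)(x,s)\, ds,
\]
and the 1D Sobolev inequality $\|f\|_{L^\infty}^2 \le 2 \|f\|_{L^2}\|f_x\|_{L^2}$, combined with the $L^2$-isometry of $H$ and the compact support of $u$, gives
\[
\|H(u^2)\|_{L^\infty} \le \sqrt{2}\, \|u^2\|_{L^2}^{1/2}\|(u^2)_x\|_{L^2}^{1/2} \le 2\|u\|_{L^\infty}\|u\|_{L^2}^{1/2}\|u_x\|_{L^2}^{1/2}.
\]
Using the Poincar\'e-type bounds $\|u_0\|_{L^\infty} \le \delta^{1/2}\|u_{0x}\|_{L^2}$ and $\|u_0\|_{L^2} \le \delta\|u_{0x}\|_{L^2}$ together with the exponential decay above, the time integral $\int_0^\infty \|H(u^2)\|_{L^\infty}\,ds$ is majorized by a quantity of the form $C\,\delta\|u_{0x}\|_{L^2}^2$ plus a correction involving $\|v_x\|_{L^2}$ (which is in turn controlled by $\|v_{0x}\|_{L^2}$ via the energy estimate). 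The smallness condition (\ref{cond-v0}) is calibrated precisely so that this majorant is at most $3/2$, forcing $v(x,t) \le v_0(x) + 3/2 \le -3/2$ on $I$ and strictly improving the bootstrap hypothesis, so $T^{\ast} = \infty$.

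The stated rates $\|u\|_{L^\infty},\|u\|_{H^1} \le C e^{-3t}$ and the uniform bound $\|v\|_{H^1} \le C$ then follow: the first two from the pointwise decay and the dissipative energy estimate for $u_x$; the third from the uniform bound on $\|v_x\|_{L^2}$ together with $\|v(t)\|_{L^2} \le \|v_0\|_{L^2} + \int_0^\infty \|u\|_{L^\infty}\|u\|_{L^2}\,ds < \infty$. The main obstacle I anticipate is the quantitative bookkeeping required to match the numerical constants in the Sobolev embedding and in the Gr\"onwall bounds against the sharp factor $1/4$ in (\ref{cond-v0}); in particular, it may be cleaner to bootstrap directly on $\|v_x\|_{L^2}$ (whose presence controls the $L^\infty$-oscillation of $v$ on $I$ via $|v(x)-v(y)| \le \delta^{1/2}\|v_x\|_{L^2}$) rather than on the pointwise level $-3/2$, and to close the loop by combining the oscillation bound with the initial pointwise bound $v_0 \le -3$.
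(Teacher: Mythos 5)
Your overall skeleton coincides with the paper's: a continuation argument in which negativity of $v$ on the (time-invariant) support of $u$ gives pointwise exponential decay of $u$, combined with the same $u_x$ and $v_x$ energy estimates and the Poincar\'e bound $\|u\|_{L^\infty}\le\delta^{1/2}\|u_x\|_{L^2}$. Where you genuinely diverge is in how the pointwise bound on $v$ is preserved: you control the full drift $v(x,t)-v_0(x)=\int_0^t H(u^2)\,ds$ by time-integrating an $\|H(u^2)\|_{L^\infty}$ bound obtained from Gagliardo--Nirenberg and the $L^2$-isometry of $H$. The paper avoids estimating $\|H(u^2)\|_{L^\infty}$ altogether: since $u^2\ge 0$ is supported to the right of the left endpoint $a$ of $\mathrm{supp}(u)$, one has $v_t(a,t)=H(u^2)(a,t)=\frac{1}{\pi}\int\frac{u^2(y)}{a-y}\,\mathd y<0$, so $v(a,t)\le v_0(a)\le -3$ for all time, and then $v(x,t)\le v(a,t)+\delta^{1/2}\|v_x\|_{L^2}$ across the support. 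The bootstrap therefore only involves $v<-2$ on $\mathrm{supp}(u)$ together with $2\delta^{1/2}\|v_x\|_{L^2}<1$, and \myref{cond-v0} is calibrated precisely to keep $\delta^{1/2}\|v_x\|_{L^2}$ below $1/4$ via $\frac{\mathd}{\mathd t}\|v_x\|_{L^2}\le 2\delta^{1/2}\|u_x\|_{L^2}^2$ and the decay $\|u_x\|_{L^2}\le\|u_{0x}\|_{L^2}e^{-3t}$ --- not to control any time integral of $\|H(u^2)\|_{L^\infty}$. This is the one idea you are missing, and it is what lets the paper close the loop with no further bookkeeping; your closing sentence about bootstrapping on $\|v_x\|_{L^2}$ and the oscillation bound is exactly the paper's route.

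Two caveats about your variant as written. First, the decisive step --- that \myref{cond-v0} forces $\int_0^\infty\|H(u^2)\|_{L^\infty}\,\mathd s$ to stay below $3/2$ --- is asserted (``calibrated precisely''), not proved; a rough computation using $\|u\|_{L^\infty}\le\delta^{1/2}\|u_{0x}\|_{L^2}e^{-3t}$, $\|u\|_{L^2}\le\delta\|u_{0x}\|_{L^2}e^{-3t}$, and the Gr\"onwall bound on $\|u_x\|_{L^2}^2+\|v_x\|_{L^2}^2$ suggests the majorant is indeed well below $3/2$ under \myref{cond-v0}, but this verification is the heart of your argument and must be carried out. Second, with the weaker threshold $v\le-3/2$ your $u_x$ estimate gives $\frac{\mathd}{\mathd t}\|u_x\|_{L^2}\le-3\|u_x\|_{L^2}+2\|u\|_{L^\infty}\|v_x\|_{L^2}$, and Duhamel only yields a bound of the form $(1+t)e^{-3t}$, which falls just short of the stated $\|u\|_{H^1}\le Ce^{-3t}$; the paper gets the clean rate because its bootstrap level $v<-2$ gives $\frac{\mathd}{\mathd t}\|u_x\|_{L^2}\le(-4+2\delta^{1/2}\|v_x\|_{L^2})\|u_x\|_{L^2}<-3\|u_x\|_{L^2}$. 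Since your own drift estimate leaves ample room, you can simply strengthen your bootstrap level (e.g.\ to $v\le-2.5$) and recover the stated rates.
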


\begin{proof}
Note that \myref{cond-v0} implies that 
$\delta^{1 / 2} \|v_{0x} \|_{L^2} < \frac{1}{4}$ which gives
$- 4 + 2 \delta^{1 / 2} \|v_{0x} \|_{L^2} < - 3.5$.
By using an argument similar to the local well-posedness analysis,
we can show that there exists $T_0 > 0$ such that 
$\|u\|_{H^1}$ and $\|v\|_{H^1}$ are bounded, 
$v < - 2$ on $\mbox{supp} (u)$, and
$ 2 \delta^{1 / 2} \|v_x \|_{L^2} < 1 $ for $ 0 \leq t < T_0$.

Let $\left[ 0, T \right)$ be the largest time interval on which 
$\|u\|_{H^1}$ and $\|v\|_{H^1}$ are bounded, and 
both of the following inequalities hold:
\begin{eqnarray}
\label{cond-v-1}
v < - 2 \quad \mbox{on} \quad \mbox{supp} (u) \quad \mbox{and} \quad
 2 \delta^{1 / 2} \|v_x \|_{L^2} <  1 .
\end{eqnarray}
We will show that $T = \infty$.
  
We have for $ 0 \leq t < T$ that
  \begin{equation}
    \frac{\mathd}{\mathd t} \int u_x^2 \mathd x= 4 \int (vu_x^2 + uv_x u_x) \mathd x \le - 8
    \int u_x^2 \mathd x +4 \|u\|_{L^{\infty}} \|v_x \|_{L^2} \|u_x \|_{L^2} .
  \end{equation}
Observe that $\mbox{supp}(u) = \mbox{supp} (u_0)$ for all times.
Let $\Omega = \mbox{supp} (u_0)$. 
 Since $\mbox{supp} (u)$ has length $\delta$, we can use the
 Poincar\'e inequality to get
  \begin{equation}
\label{u-max}
    \|u\|_{L^{\infty}} \le \delta^{1 / 2} \|u_x \|_{L^2(\Omega)}
     = \delta^{1 / 2} \|u_x \|_{L^2} .
  \end{equation}
  Therefore we obtain the following estimate:
  \begin{eqnarray}
    \frac{\mathd}{\mathd t} \|u_x \|_{L^2} &\le & -4 \|u_x \|_{L^2} +
    2\delta^{1 / 2} \|v_x \|_{L^2} \|u_x \|_{L^2} \nonumber\\
 &= & \left( - 4 +
    2\delta^{1 / 2} \|v_x \|_{L^2} \right) \|u_x \|_{L^2} < -
    3 \|u_x \|_{L^2} .
  \end{eqnarray}
  Thus we have for $0 \leq t < T$ that
  \begin{equation}
\label{estimate-ux}
    \|u_x \|_{L^2} \le \|u_{0 x} \|_{L^2} e^{-3 t} .
  \end{equation}
  On the other hand, we have that 
  \begin{eqnarray}
    \frac{\mathd}{\mathd t} \int v_x^2 \mathd x&=& 4 \int v_x H \left( uu_x \right)\mathd x \le 4 \|v_x \|_{L^2} \| u u_x\|_{L^2} \nonumber\\
&\le& 4\|u\|_{L^{\infty}} \|v_x
    \|_{L^2} \|u_x \|_{L^2} \le 4 \delta^{1 / 2} \|v_x \|_{L^2} \|u_x
    \|_{L^2}^2 ,
  \end{eqnarray}
where we have used the property that $ \| H(f)\|_{L^2} \leq \|f\|_{L^2}$
and the Poincare inequality \myref{u-max}.
  Now using \myref{estimate-ux}, we get
  \begin{equation}
    \frac{\mathd}{\mathd t} \|v_x \|_{L^2} \le 2 \delta^{1 / 2} \|u_x
    \|_{L^2}^2 \le 2 \delta^{1 / 2} \|u_{0 x} \|_{L^2}^2 e^{-6 t} .
  \end{equation}
  As a consequence, we obtain for $0 \leq t < T$ that
  \begin{equation}
\label{estimate-v0x}
    \|v_x \|_{L^2} \le \|v_{0 x} \|_{L^2} + \frac{1}{3} \delta^{1 / 2} \|u_{0 x}
    \|_{L^2}^2 .
  \end{equation}
  Now observe that at the left end of the support of $u$, $v_t = H u^2$ is
  always negative. Since $v_0 \leq - 3$ on the support of $u$,
we conclude that $v (x,t)\le - 3$ at the left end of the support of $u$
for all times.
Now, we apply the Poincar\'e inequality in the support of $u$ and
use \myref{estimate-v0x} to obtain
  \begin{equation}
v \le - 3 + \delta^{1 / 2} \|v_x \|_{L^2(\Omega )} \le - 3 + \delta^{1
    / 2}  \left( \|v_{0 x} \|_{L^2} + \frac{1}{3} \delta^{1 / 2} \|u_{0 x} \|_{L^2}^2
    \right) ,
  \end{equation}
  on $\mbox{supp} (u)$ for all $t \in \left[ 0, T \right)$.
  
Next, we perform $L^2$ estimates. We can easily show by 
using $v_t = H u^2$ that
\begin{eqnarray*}
\frac{1}{2} \frac{d}{dt} \int v^2 dx = \int v H u^2 dx  
  \leq  \| v\|_{L^2} \| u^2 \|_{L^2} 
 \leq  \| v\|_{L^2} \| u \|_{L^\infty}^2 \delta^{1/2},
\end{eqnarray*}
which gives
\[
\frac{d}{dt} \|v\|_{L^2} \leq \delta^{1/2} \| u \|_{L^\infty}^2.
\]
It follows from \myref{u-max} and \myref{estimate-ux} that
\begin{equation}
\label{u-max1}
\| u \|_{L^\infty} \leq \delta^{1/2} \|u_{0x}\|_{L^2} e^{-3t} .
\end{equation}
Therefore, we obtain 
\[
\frac{d}{dt} \|v\|_{L^2} \leq \delta \| u_{0x} \|_{L^2}^2 e^{-6t},
\]
which implies
\begin{equation}
\label{v-L2}
\|v\|_{L^2} \leq \|v_0\|_{L^2} + \frac{1}{6} \delta \| u_{0x} \|_{L^2}^2,
\end{equation}
for $ 0 \leq t < T$.

Similarly, using $v < -2 $ on the support of $u$, we can easily
show that
\begin{equation}
\label{u-L2}
\|u\|_{L^2} \leq \|u_0\|_{L^2} e^{-4t} ,
\end{equation}
for $ 0 \leq t < T$.

To summarize, we have shown that $\|u\|_{H^1} $ and
$\|v\|_{H^1}$ are uniformly bounded for $ 0 \leq t < T$, 
and
  \begin{equation}
    \|v_x \|_{L^2} \le \|v_{0 x} \|_{L^2} + \frac{1}{3} \delta^{1 / 2} \|u_{0 x}
    \|_{L^2}^2 ,
  \end{equation}
  and
  \begin{equation}
    v \le - 3 + \delta^{1 / 2} \|v_x \|_{L^2} \le - 3 + \delta^{1
    / 2}  \left( \|v_{0 x} \|_{L^2} + \frac{1}{3} \delta^{1 / 2} \|u_{0 x} \|_{L^2}^2
    \right) ,
  \end{equation}
on $\mbox{supp} (u)$ for $ 0 \leq t < T$.
  
By our assumption on the initial data, we have 
  \begin{equation}
    \delta^{1 / 2}  \left( \|v_{0 x} \|_{L^2} + \frac{1}{3} \delta^{1 / 2} \|u_{0 x}
    \|_{L^2}^2 \right) < \frac{1}{4} \; .
  \end{equation}
Therefore, we have proved that if
  \begin{equation}
\label{assumption-A}
    v < - 2 \mbox{ on } \mbox{supp} (u) \quad \mbox{ and } \quad  2 \delta^{1 /
    2} \|v_x \|_{L^2}  < 1 ,
  \end{equation}
  $ 0 \leq t < T $, then we actually have 
  \begin{equation}
    v \le - 2.75 \mbox{ on } \mbox{supp}(u) \quad \mbox{ and } \quad 
     2\delta^{1 / 2} \|v_x \|_{L^2} < 0.5 ,
  \end{equation}
$0 \leq t < T$. This implies that we can 
extend the time interval beyond $[0, T)$ so that 
\myref{assumption-A} is still valid. This contradicts 
the assumption that
$[0, T)$ is the largest time interval on which 
\myref{assumption-A} is valid. This
contradiction shows that $T$ can not be a finite number, i.e.
\myref{assumption-A} is true for all times. This in turn
implies that $\|u\|_{H^1}$ and $\|v\|_{H^1}$ are bounded for
all times. Moreover, we have shown that both $\| u\|_{L^\infty}$
and $\|u\|_{H^1}$ decay exponentially fast in time and 
$\|v\|_{H^1} $ is bounded uniformly for all times 
(see \myref{u-max1}, \myref{estimate-ux}, \myref{estimate-v0x},
\myref{v-L2} and \myref{u-L2}).
This proves Theorem \ref{Global existence}.
\end{proof}

\section{Numerical Results}

In this section, we perform extensive numerical experiments 
to study the nature of the singularities of the nonlocal system.
Our numerical results demonstrate convincingly that the
nonlocal system develops asymptotically self-similar singularities
in a finite time for both the inviscid and the viscous nonlocal
systems.

\subsection{Set-up of the Problem}

In our numerical study, we use the following nonlocal 
system without the factor of 2 in front of the nonlinear term $uv$
in the $u$-equation \footnote{As we have shown in Section 2, dropping 
this factor only changes the scaling of the solution.}:
\begin{eqnarray}
u_t&=&uv+\nu u_{xx},\\
v_t&=&H(u^2)+\nu v_{xx}.
\end{eqnarray}
We study the nonlocal system for two types of initial data.
The first type of initial data has compact support.
The second type of initial data is periodic. The nature of
the singularities for these two types of initial data is
very similar. In the case of periodic data, we can use FFT 
to compute the Hilbert transform. This enables us to perform
our computations with a very high space resolution.

Below we describe the initial data that we will use in our
numerical experiments. We choose three different initial conditions.
The first initial condition has compact support which lies
in $\Omega=[0.45,0.55]$ and $v_0 \equiv  0$. Within the
compact support $\Omega$, $u_0$ is given by
\begin{eqnarray}
\mbox{Initial Condition I:} \quad \quad
u_0= \exp\left (1-\left ( 1-\left(\frac{x-0.5}{0.05}\right)^2 \right )^{-1}\right ) \; \mbox{for}\; x\in\Omega,
\quad v_0=0.\nonumber
\end{eqnarray}
We call this Initial Condition I. The largest 
resolution we use for Initial Condition I is $N=16,384$.
The timestep is 
chosen to be $\Delta t=10^{-3}/\|u\|_{L^\infty}$ in order to
 resolve the maximum growth of $\|u\|_{L^\infty}$.

The last two initial conditions are periodic with period one.
They are given as follows:
\begin{eqnarray}
\begin{array}{lll}
\mbox{Initial Condition II:}&\quad u_0=2+\sin(2\pi x)+\cos(4\pi x),&\quad v_0=0,\\
\mbox{Initial Condition III:}&\quad \D u_0=\frac{1}{1.2+\cos(2\pi x)},&\quad v_0=0 .
\end{array}\nonumber
\end{eqnarray}
We call them Initial Condition II and Initial Condition III respectively.
The largest resolution that we use for these two periodic 
initial conditions is $N=262,144=2^{18}$, and the timestep is chosen
to be $\Delta t=10^{-3}/\|u\|_{L^\infty}$.

We use the fourth order classical Runge-Kutta method to 
discretize the inviscid nonlocal system in time. For the 
viscous nonlocal system, we consider only periodic initial 
data since the
solution will not have compact support anymore. In order
to remove the stiffness of the time discretization due
to the viscous term, we first apply Fourier transform to 
the nonlocal system to obtain
\begin{eqnarray}
\partial_t \widehat{u}(k,t) &=&\widehat{(uv)}(k,t) - \nu k^2 
\widehat{u}(k,t),\\
\partial_t \widehat{v}(k,t) &=&-i \mbox{sgn}(k) \widehat{(u^2)}(k,t) 
- \nu k^2 \widehat{v}(k,t),
\end{eqnarray}
where $\widehat{u}(k,t)$ is the Fourier transform of $u$ and 
$k$ is the wave number.
We then reformulate the viscous term as an integral factor
\begin{eqnarray}
\frac{\partial}{\partial t}(e^{\nu k^2 t} \widehat{u}(k,t)) 
&=& e^{\nu k^2 t} \widehat{(uv)}(k,t) ,\\
\frac{\partial}{\partial t}(e^{\nu k^2 t} \widehat{v}(k,t)) 
 &=&-i \mbox{sgn}(k) e^{\nu k^2 t} \widehat{(u^2)}(k,t) .
\end{eqnarray}
Now we apply the classical Runge-Kutta method to discretize
the above system in time. The resulting time discretization
method will be free of the stiffness induced by the viscous term.

For periodic initial data, we use the spectral method
to discretize the Hilbert transform by using the 
explicit formula $\widehat{H}(k) = -i \mbox{sgn}(k)$.
For initial data of compact support, we  
use the well-known alternating trapezoidal rule to discretize
the Hilbert transform which gives spectral accuracy. For the
sake of completeness, we describe the method below, see also 
\cite{Shelley92}. 
Let $x_j = j h$ the grid point and $h>0$ is the grid size. 
The alternating trapezoidal rule discretization of the 
Hilbert transform is given by the following quadrature:
\begin{equation}
H(f)(x_i) = \sum_{(j-i) odd} \frac{f_j}{x_i-x_j} 2h .
\end{equation}
Therefore, our numerical method has spectral accuracy 
in space and and fourth order accuracy in time. The
high order accuracy of the method plus high space 
resolution and adaptive time-stepping is essential 
for us to resolve the asymptotically self-similar
singular solution structure of the nonlocal system.

\subsection{Asymptotically self-similar blowup of the inviscid nonlocal system}

In the singularity analysis, we have proved that the nonlocal system must
develop a finite time singularity for a large class of initial
data. However, the singularity analysis does not tell us the
nature of the singularity. Understanding the nature of the 
singularity is the main focus of our numerical
study. Our numerical results show that for all three initial 
conditions we consider here, they develop asymptotically
self-similar singularities in a finite time. The numerical
evidence of self-similar singularities is quite convincing 
for all three initial data that we consider. As is the case
for the original 3D model, the mechanism of forming such 
self-similar blowup of the nonlocal system is due to the fact that
we neglect the convection term in our model. As it is 
demonstrated in \cite{HouLi07,HouLei09}, the convection term 
tends to destroy the mechanism for generating the finite time 
blowup in the 1D or 3D model. Indeed, a recent numerical study 
shows that the 3D incompressible Euler equation does not seem 
to grow faster than double exponential in time \cite{HL06,Hou09}. 

We use the following asymptotic singularity form fit to 
predict the singularity time and the blowup rate:
\begin{eqnarray}
\label{umax fit form}
\|u\|_{L^\infty}=\frac{C}{(T-t)^\al},
\end{eqnarray}
where $T$ is the blowup time.
We find that near the singularity time, the 
inverse of $\|u\|_{L^\infty}$ is almost a perfect linear 
function of time, see Figure \ref{umax compact}.
\begin{figure}
\begin{center}
\includegraphics[width=0.6\textwidth]{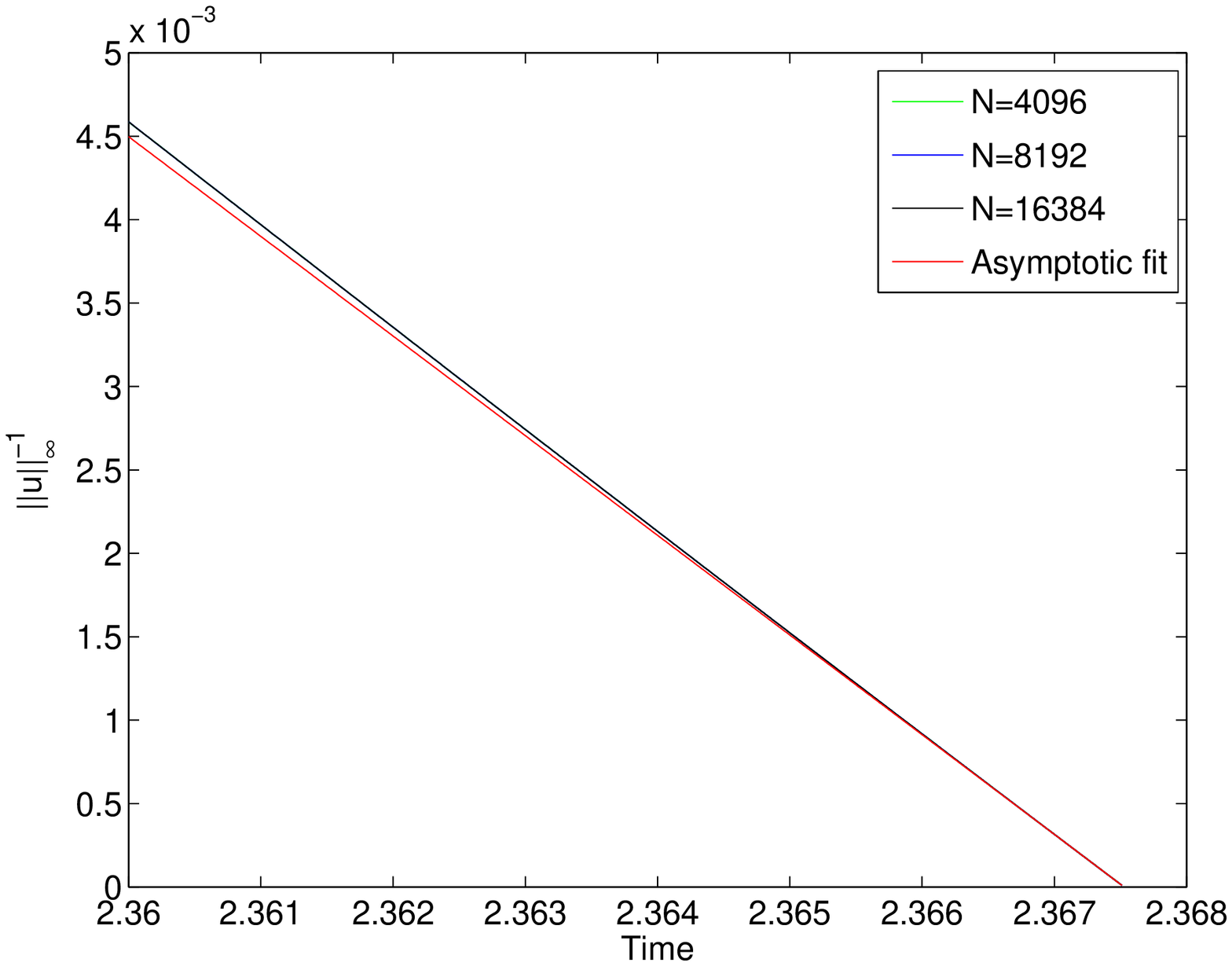}
\includegraphics[width=0.6\textwidth]{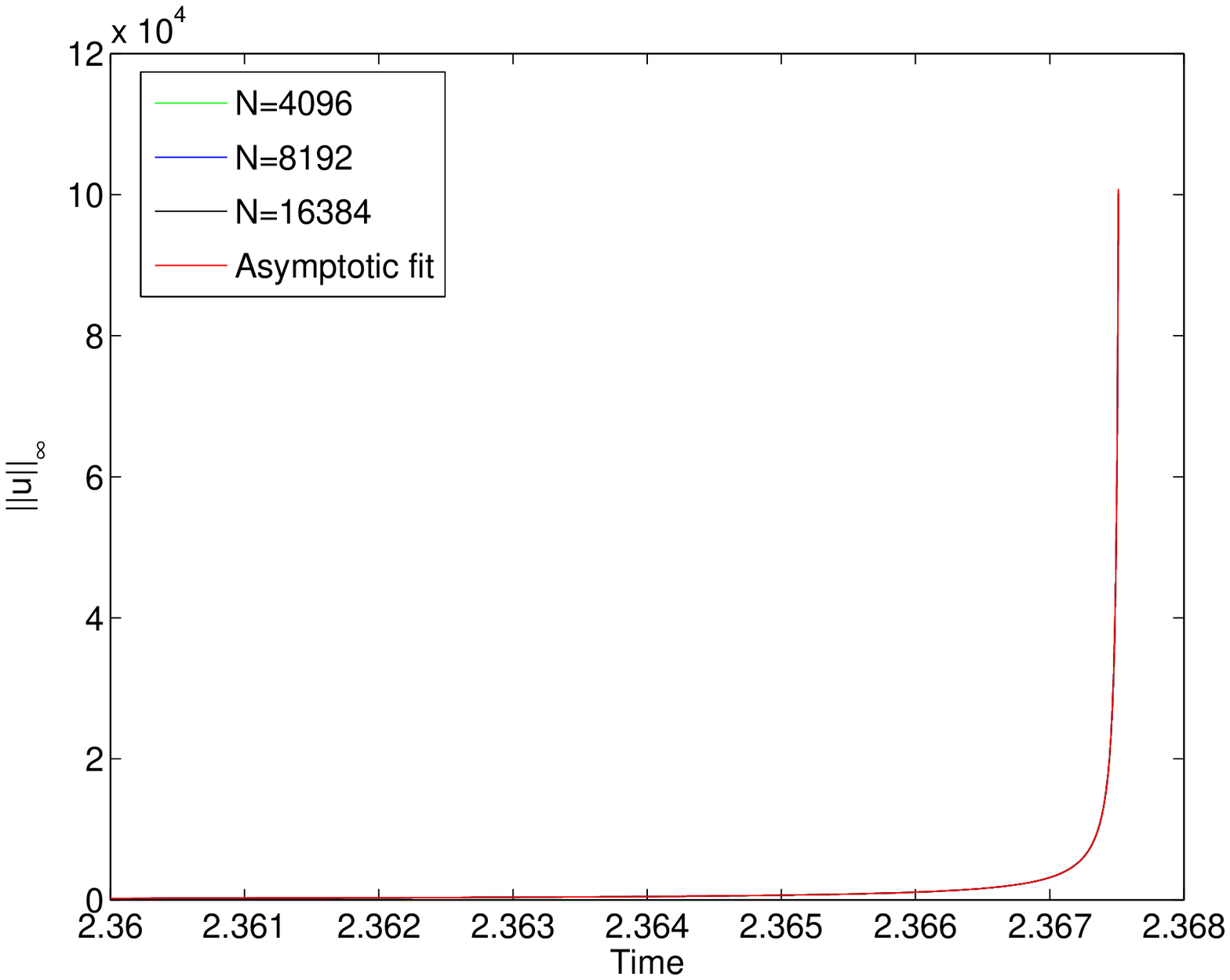}
\end{center}
\caption{Top: The inverse of $\|u\|_{L^{\infty}}$ (black) versus 
the asymptotic fit (red) for Initial Condition I with $\nu =0$. 
The fitted blowup 
time is $T=2.36752830915169$ and the scaling constant is
$C=1.67396437016231$. Bottom: 
$\|u\|_{L^{\infty}}$ (black) versus the asymptotic fit (red) 
for Initial Condition I.}
\label{umax compact}
\end{figure}
To obtain a good estimate for the singularity time, we perform
a least square fit for the inverse of $\| u\|_{L^\infty}$.  
We find that $\alpha =1$ gives the best fit.
The same least square fit also determines the potential
singularity time $T$ and the constant $C$.

To confirm that the above procedure indeed gives a good fit
for the potential singularity, we plot $\|u\|_{\infty}^{-1}$
as a function of time with a sequence of increasing
resolutions against the asymptotically form fit for the
three initial conditions we consider here.
In Figure \ref{umax compact}, we perform
such comparison for Initial Condition I
with a sequence of increasing resolutions from $N=4096$ to 
$N=16384$.
We can see that the agreement between the computed solutions 
and the asymptotically fitted solution is excellent as the
time approaches the potential singularity time. In the lower
box of Figure \ref{umax compact}, we plot
$\|u\|_{\infty} $ computed by our adaptive method
against the form fit $C/(T-t)$ with 
$T=2.36752830915169$ and $C=1.67396437016231$.
The computed solutions and the asymptotically fitted solution 
are almost indistinguishable. This asymptotic blowup rate
is qualitatively similar to that of the 3D model \cite{HouLei09}.

We have also performed a similar comparison between
the computed $\|u\|_{L^\infty}$ and the asymptotically
fitted solution for Initial Conditions II and III in
Figures \ref{umax ini1} and Figures \ref{umax ini2}
respectively. For these two periodic initial conditions,
we can afford even higher resolutions 
ranging from $N=2^{14}$ to $N=2^{18}$. Again, we observe
excellent agreement between the computed solutions
and the asymptotically fitted singular solution. 

\begin{figure}
\begin{center}
\includegraphics[width=0.6\textwidth]{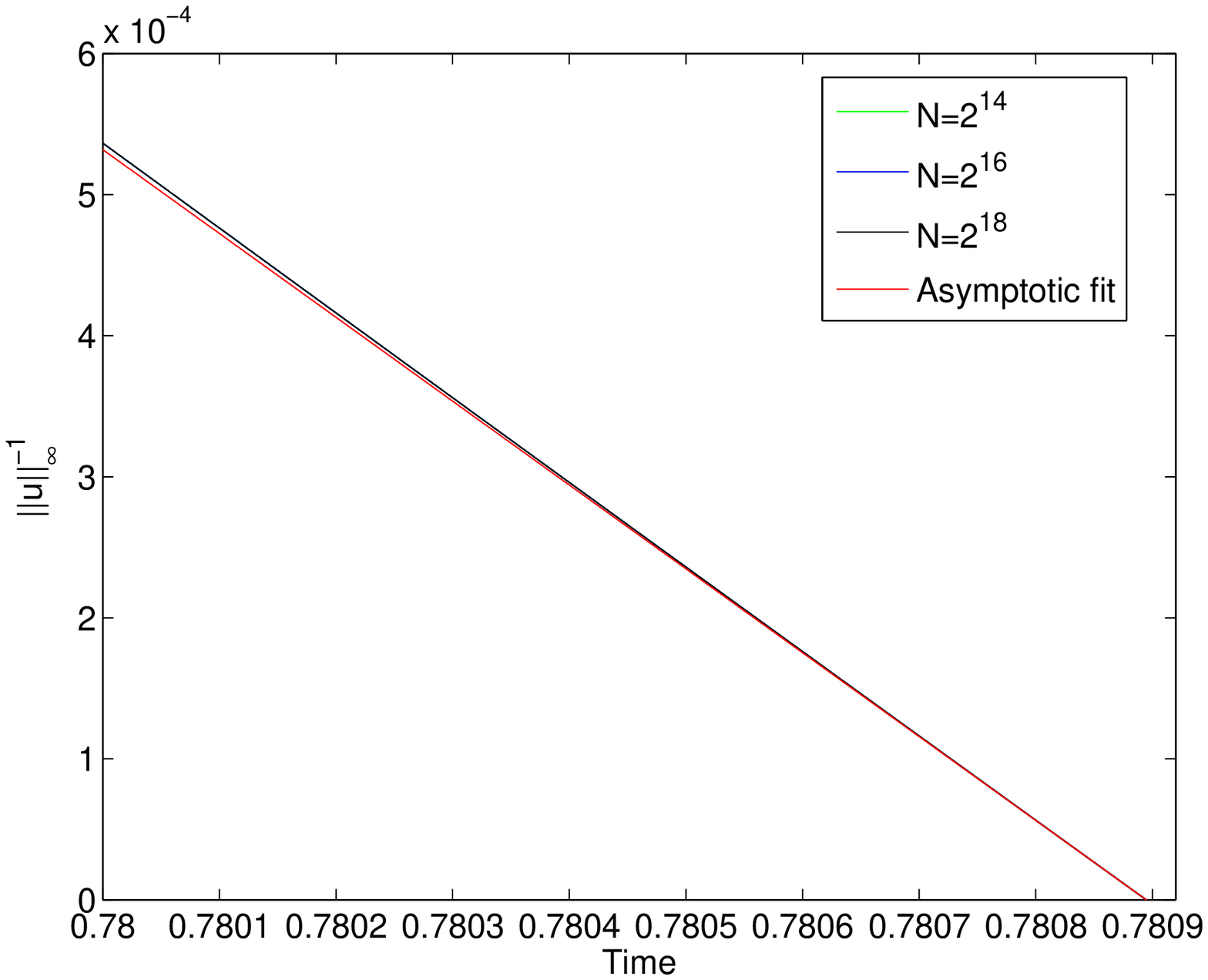}
\end{center}
\caption{The inverse of $\|u\|_{L^{\infty}}$ (black) versus the asymptotic fit (red) for Initial Condition II with $\nu = 0$. The fitted 
blowup time is $T=0.780894805082166$ and the scaling constant is
$C=1.68253514799506$. }
\label{umax ini1}
\end{figure}

\begin{figure}
\begin{center}
\includegraphics[width=0.6\textwidth]{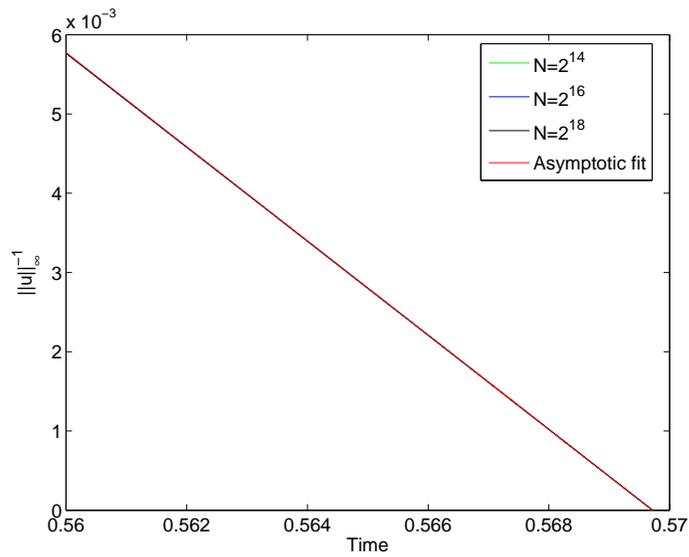}
\end{center}
\caption{The inverse of $\|u\|_{L^{\infty}}$ (black) 
versus the asymptotic fit (red) for Initial Condition III 
with $\nu = 0$. The fitted blowup time is 
$T=0.569719056780405$ and the scaling constant
is $C=1.68293676812485$.}
\label{umax ini2}
\end{figure}

After we obtain an estimate for the singularity time, we
can use it to look for a dynamically rescaled profile 
$U(\xi,t), V(\xi,t)$ near the singularity of the form 
\begin{eqnarray}
\label{u-self-similar}
u(x,t)=\frac{1}{T-t}\,U\left(\frac{x-x_0(t)}{(T-t)^\beta},t\right), 
\quad \mbox{as} \; t\rightarrow T,\\
\label{v-self-similar}
v(x,t)=\frac{1}{T-t}\,V\left(\frac{x-x_0(t)}{(T-t)^\beta},t\right),\quad 
\mbox{as} \;t\rightarrow T,
\end{eqnarray}
where $T$ is the predicted blowup time in the singularity form 
fit \myref{umax fit form}, $\beta$ is a parameter to be determined, 
and $x_0(t)$ is the location in which $|u|$ achieves its 
global maximum at $t$.

Again, we use a least square fit to determine $\beta$. 
Our numerical study indicates that $\beta=\frac{1}{2}$ with
a logarithmic correction. More precisely, we find that
the dynamically rescaled variable $\xi$ has the form:
\begin{eqnarray}
\xi=\frac{x-x_0(t)}{(T-t)^{1/2}\log(1/(T-t))^{1/2}}.
\label{xi-ss-form}
\end{eqnarray}
In terms of this rescaling variable $\xi$, we define
the dynamically rescaled profiles $U(\xi,t)$ and $V(\xi,t)$ 
through the following relationship:
\begin{eqnarray}
\label{u-self-similar-2}
u(x,t)&=&\frac{1}{T-t}\,U\left(\xi,t\right),\\
\label{v-self-similar-2}
v(x,t)&=&\frac{1}{T-t}\,V\left(\xi,t\right).
\end{eqnarray}
In Figure \ref{profile compact}, we plot the
self-similar profiles $U$ and $V$ at three 
different times for Initial Condition I. We can see
that the rescaled profiles for these three different times
agree with one another very well. From Figure \ref{profile compact},
we can see that there is a significant overlap between the inner
region of $U$ and the inner region of $V$ where $V$ is
positive. Such overlap persists dynamically and is 
responsible for producing a quadratic nonlinearity in
the right hand side of the $u$-equation, which has the
form $2 uv$. On the other hand, we observe that the
position at which $u$ achieves its global maximum is
not in phase with the position at which $v$ achieves its
global maximum. In fact, the positive part of $V$ always
moves ahead of $U$. This is a consequence of the property
of the Hilbert transform. As a result, the nonlinear
interaction between $u$ and $v$ produces a traveling wave
that moves to the right. Such phenomena seem quite
generic. We observe the same phenomena for all three
initial conditions for both the inviscid and the viscous
models. This phenomenon is also qualitatively similar to
that of the 3D model \cite{HouLei09}.

The strong alignment between the rescaled profile of $u$
and $v$ is the main mechanism for the solution of the 
nonlocal system to develop an asymptotically self-similar 
singularity in the form given by \myref{xi-ss-form} and 
\myref{u-self-similar-2}-\myref{v-self-similar-2}.
We observe essentially the same phenomena for Initial 
Conditions II and III, see Figure \ref{profile compare invis}.

\begin{figure}
\begin{center}
\includegraphics[width=0.6\textwidth]{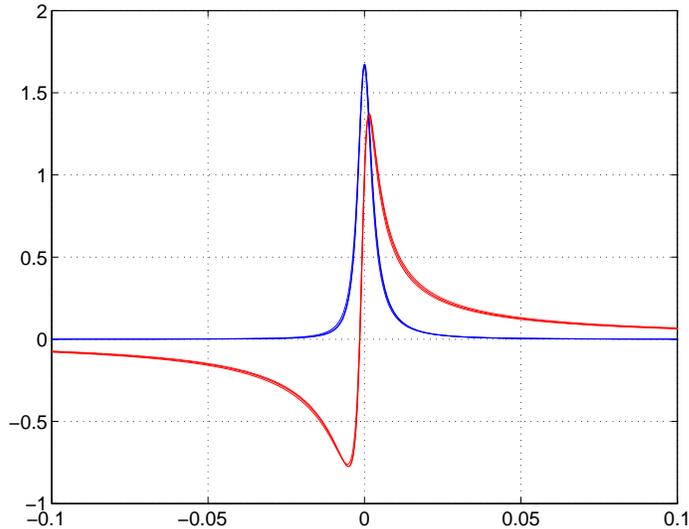}
\end{center}
\caption{Rescaled profiles $U$ and $V$ for Initial Condition I
with $\nu = 0$ at three different times:
$t= 2.36710445318745,\; 2.36743324526419$ and  
2.36750705502071, the corresponding maximum values of $u$ are 
3948, 17617 and 78422 respectively. 
Blue: profile of $u$; Red: profile of $v$.}
\label{profile compact}
\end{figure}

%

It is interesting to see how the different rescaled profiles
corresponding to different initial conditions are related to one 
another. 
In Figure \ref{profile compare invis} (top), we put three profiles 
from three different initial conditions together. The profile from 
Initial Condition III is the widest while the profile from 
Initial Condition II is narrower than that from Initial Condition III.
The profile from Initial Condition I is the narrowest of the three
initial conditions. But what is amazing is that they can 
match each other very well by rescaling the variable
variable $\xi$. To match the three rescaled profiles, we keep the 
profile from Initial Condition III unchanged. In order to match the 
profile from Initial Condition III, we change the profile from 
Initial Condition II by rescaling $\xi\rightarrow \xi/1.58$, and 
change the profile from Initial Condition I by rescaling 
$\xi\rightarrow \xi/19.5$. As we can see from Figure 
\ref{profile compare invis} (bottom), the three rescaled 
profiles is almost indistinguishable.

\begin{figure}
\begin{center}
\includegraphics[width=0.6\textwidth]{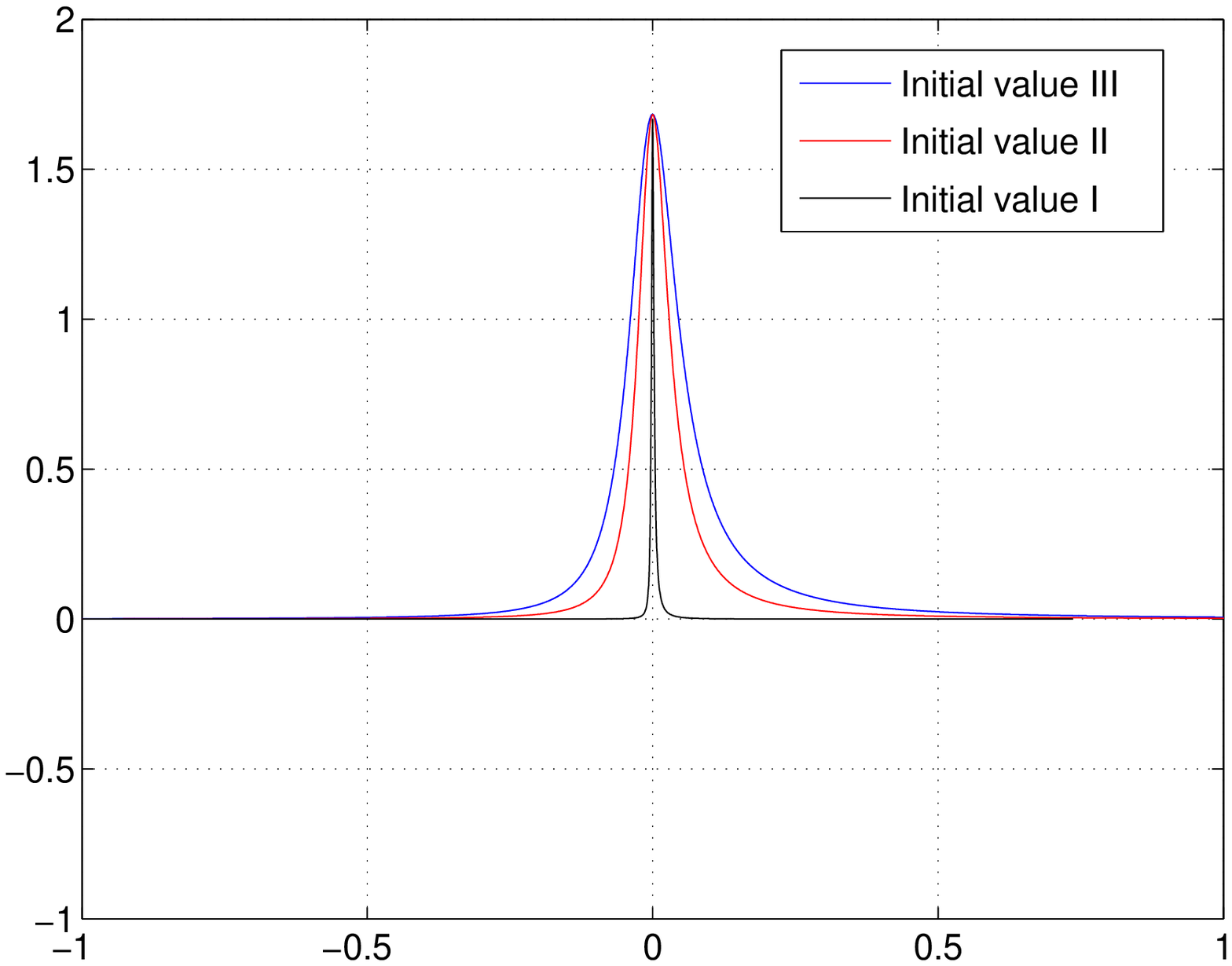}
\includegraphics[width=0.6\textwidth]{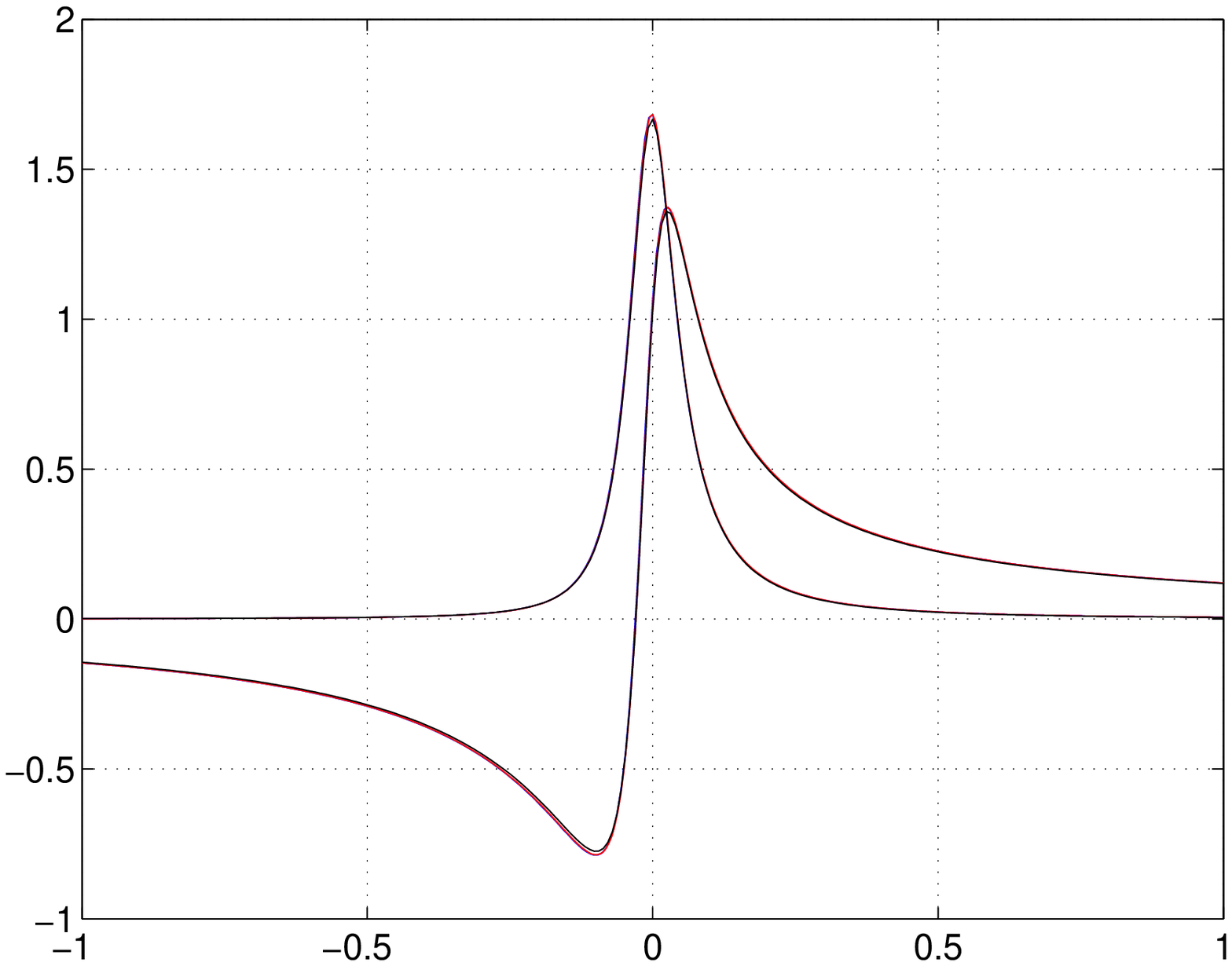}
\end{center}
\caption{The self-similar profiles for Initial Conditions I-III 
respectively ($\nu = 0$). 
Top: The original profiles for $u$ for Initial Conditions I-III; 
Bottom: The rescaled profiles. 
Black: Initial Condition I; Red: Initial Condition II; Blue: 
Initial Condition III.}
\label{profile compare invis}
\end{figure}
To gain some insight into this phenomenon, we perform some 
analysis of the self-similar solutions. We assume that the self-similar
profiles converge to a steady state as $t\rightarrow T$. 
\begin{eqnarray}
u(x,t)\rightarrow\frac{1}{T-t}\,U\left(\xi; \lambda\right),\quad
\mbox{as} \; t\rightarrow T\\
v(x,t)\rightarrow\frac{1}{T-t}\,V\left(\xi; \lambda\right),\quad
\mbox{as} \; t\rightarrow T,
\end{eqnarray}
where $\D\lambda=\lim_{t\rightarrow T}\left((T-t)^{1/2}
\frac{d}{dt}x_0(t)\right)$.

If we neglect the logarithmic correction in $\xi$ and substitute 
above equations into the nonlocal system, we obtain the equations for 
$U$ and $V$ as follows:
\begin{eqnarray}
\label{ss u}
U+\beta\xi U_\xi-\lambda U_\xi &=&UV,\\
\label{ss v}
V+\beta\xi V_\xi-\lambda V_\xi &=&H(U^2).
\end{eqnarray}

Let $U_1(\xi), V_1(\xi)$ be the solution of the self-similar 
system \myref{ss u}, \myref{ss v} corresponding to $\lambda=1$, 
then the solution for $\lambda\ne 1$ can be obtained by using the
following rescaling of the self-similar variable $\xi$:
\begin{eqnarray}
U(\xi;\lambda)&=&U_1(\lambda^{-1}\xi),\\
V(\xi;\lambda)&=&V_1(\lambda^{-1}\xi).
\end{eqnarray}
The profiles that are obtained from different initial conditions
have different $\lambda$, but they can match each other by 
rescaling $\xi$. This may explain why we can match different
rescaled profiles corresponding to different initial conditions
by rescaling $\xi$.

\subsection{Asymptotically self-similar blowup of the viscous nonlocal system}

In this subsection, we perform computations to investigate the
finite time singularity of the viscous nonlocal system.
In our computations, we choose the viscosity coefficient to be
$\nu=0.001$. Notice that the solution of the viscous nonlocal system
can not keep 
the compact support, so we only perform our numerical study for 
Initial Conditions II and III which are periodic. The 
computational settings are the same as those in the inviscid case. 

We use the same asymptotic singularity form fit as in the inviscid
model, i.e.
\begin{eqnarray}
\label{umax fit form vis}
\|u\|_{L^\infty}=\frac{C}{(T-t)^\al},
\end{eqnarray}
where $T$ is the blowup time.
In Figure \ref{umax vis ini1} and Figure \ref{umax vis ini2}, we plot 
$\|u\|_{L^\infty}^{-1}$ versus the asymptotic singularity fit. We can 
see that as we increase resolutions from $N=2^{14}$ to $N=2^{18}$,
$\|u\|_{L^\infty}^{-1}$ converges to the asymptotic fit which is
almost a perfect straight line. This suggests that $\alpha = 1$. 
From these numerical 
results, we can see that adding viscosity with $\nu=0.001$ does not
prevent the solution from blowing up and does not change the
qualitative nature of the singular solution, although it postpones 
the blowup time. 

Next, we study the rescaled profiles of the asymptotically 
self-similar solutions of the viscous nonlocal system. 
We look for a dynamically rescaled profile
$U(\xi,t), V(\xi,t)$ near the singularity of the form
\begin{eqnarray}
\label{u-self-similar-vis}
u(x,t)=\frac{1}{T-t}\,U\left(\frac{x-x_0(t)}{(T-t)^\beta},t\right),\quad 
\mbox{as} \; t\rightarrow T,\\
\label{v-self-similar-vis}
v(x,t)=\frac{1}{T-t}\,V\left(\frac{x-x_0(t)}{(T-t)^\beta},t\right),\quad
\mbox{as} \;t\rightarrow T,
\end{eqnarray}
where $T$ is the predicted blowup time in the singularity form
fit \myref{umax fit form vis}, $\beta$ is a parameter to be determined,
and $x_0(t)$ is the location in which $|u|$ achieves its
global maximum at $t$.
Again, we use a least square fit to determine $\beta$
and find that $\beta =1/2$ with a logarithmic correction.
In Figures \ref{profile vis ini1} and
\ref{profile vis ini2}, we plot the rescaled profiles of the
asymptotically self-similar solution for Initial Conditions II 
and III respectively. The dynamically rescaled 
variable $\xi$ has the same form as that of the inviscid nonlocal 
system, i.e.
\begin{eqnarray}
\xi=\frac{x-x_0(t)}{(T-t)^{1/2}\log(1/(T-t))^{1/2}}.
\label{xi-ss-form-vis}
\end{eqnarray}
In Figure \ref{profile vis ini1}, we plot the
self-similar profiles $U$ and $V$ at three
different times for Initial Condition II. We can see
that the rescaled profiles for these three different times
agree with one another very well. As in the inviscid case,
we observe that there is a significant overlap between the inner
region of $U$ and the inner region of $V$ where $V$ is
positive. Such overlap persists dynamically and is
responsible for producing a quadratic nonlinearity in
the right hand side of the $u$-equation. 
Similar observation can be made for the self-similar
profiles for Initial Condition III, see Figure 
\ref{profile vis ini2}. 

As we can see from Figures \ref{profile vis ini1} and
\ref{profile vis ini2}, the rescaled profiles of the
viscous nonlocal system is qualitatively similar to
those of the inviscid nonlocal systems. This is to be 
expected since there is a logarithmic correction in the 
rescaling variable $\xi$ in the inviscid nonlocal system. 
Consequently, the viscous term can not dominate the nonlinear 
term in the nonlocal system. On the other hand, we observe
that the profiles corresponding to the viscous nonlocal system 
are wider and more symmetric than those corresponding to the 
inviscid nonlocal system. This seems to make sense because
the viscosity tends to smooth the singularity and make the 
profiles smoother and more symmetric. 

We have also performed a similar numerical study of the
viscous nonlocal system with $\nu = 0.01$ for Initial
Conditions II and III. We find that the viscous nonlocal 
system develops an asymptotically self-similar singularity 
in a finite time with the same blowup rate and self-similar 
scaling as the case of $\nu =0.001$.

\begin{figure}
\begin{center}
\includegraphics[width=0.6\textwidth]{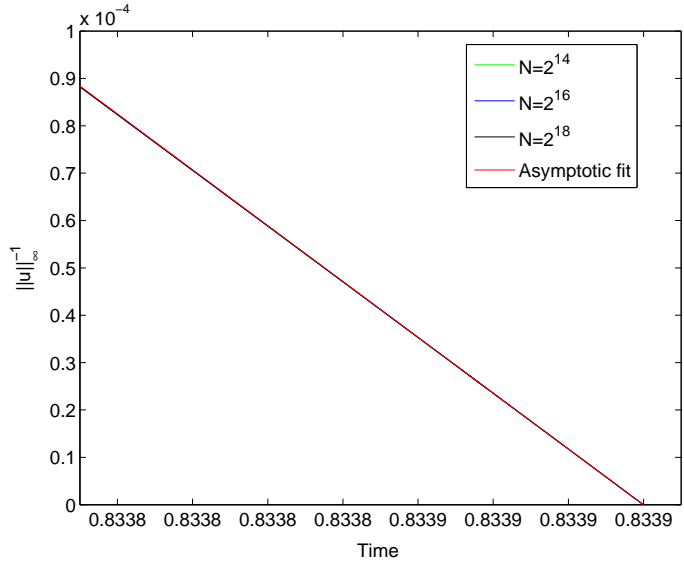}
\end{center}
\caption{The inverse of $\|u\|_{\infty}$ (black) versus the asymptotic fit (red) for Initial
Condition II with viscosity $\nu = 0.001$. The fitted blowup time is $T = 0.833919962702315$
and the scaling constant is $C = 1.69630372479547$.}
\label{umax vis ini1}
\end{figure}

\begin{figure}
\begin{center}
\includegraphics[width=0.6\textwidth]{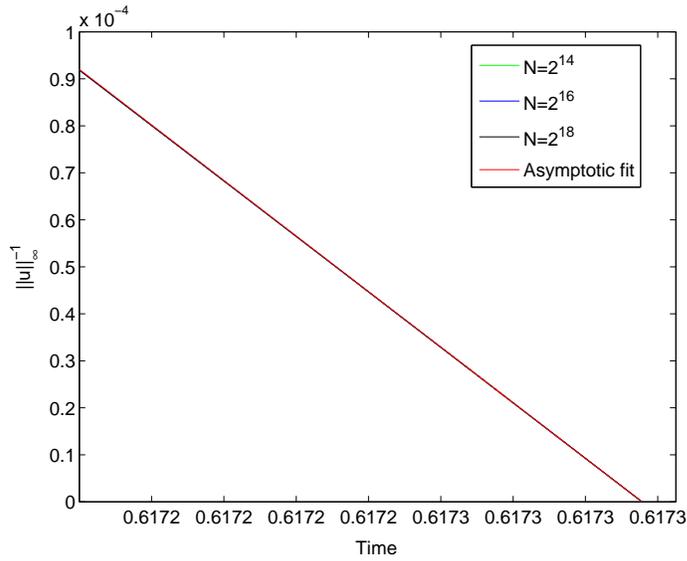}
\end{center}
\caption{The inverse of $\|u\|_{\infty}$ (black) versus the asymptotic fit (red) for Initial
Condition III with viscosity $\nu = 0.001$. The fitted blowup time is $T = 0.617315651741129$
and the scaling constant is $C = 1.69150344092375$.}
\label{umax vis ini2}
\end{figure}

\begin{figure}
\begin{center}
\includegraphics[width=0.6\textwidth]{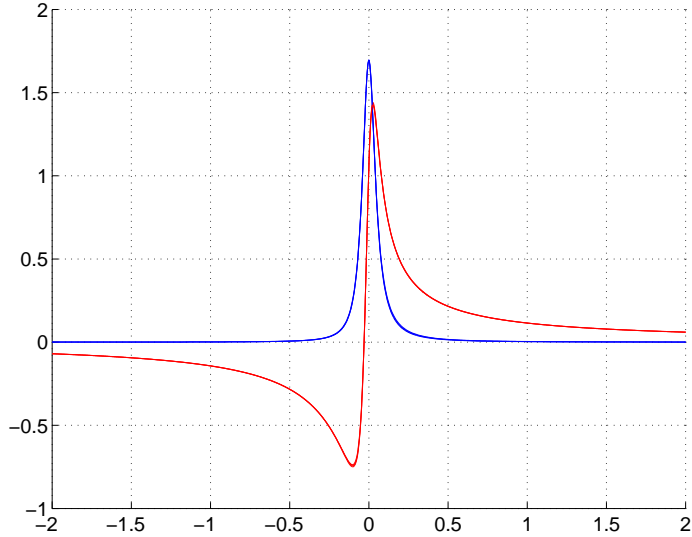}
\end{center}
\caption{Rescaled profiles $U$ and $V$ for Initial Condition II with viscosity $
\nu = 0.001$ at
$t = 0.833917828434707, 0.83391976141767$ and 0.833919943745501
 respectively. The corresponding
maximum values of u are 794399, 8416207 and 89496701 respectively. Blue: profile
of $u$; Red: profile of $v$.}
\label{profile vis ini1}
\end{figure}

\begin{figure}
\begin{center}
\includegraphics[width=0.6\textwidth]{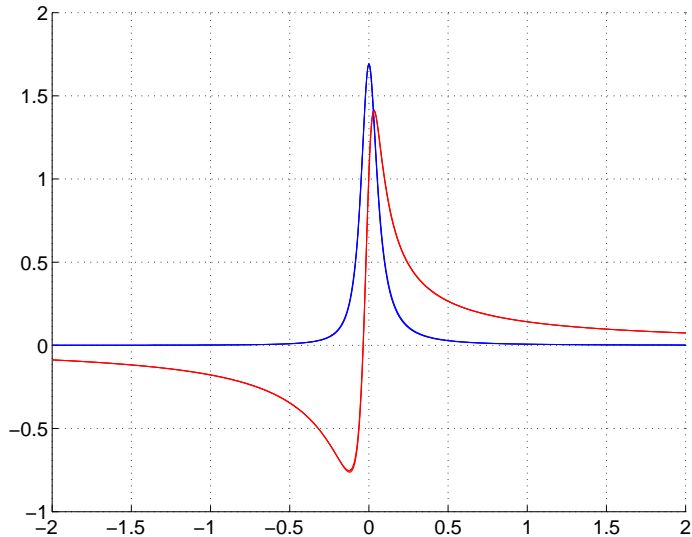}
\end{center}
\caption{Rescaled profiles $U$ and $V$ for Initial Condition III with viscosity $\nu = 0.001$ at
$t = 0.617313605456105, 0.617315459830455$ and 0.617315633726175
 respectively. The corresponding
maximum values of u are 826395, 8808734 and 94072100 respectively. Blue: profile
of $u$; Red: profile of $v$.}
\label{profile vis ini2}
\end{figure}

\vspace{0.1in}
\newpage
\noindent
{\bf Acknowledgments}
Dr. T. Hou would like to acknowledge NSF for their generous 
support through the Grants DMS-0713670 and DMS-0908546. 
The work of Drs. Z. Shi and S. Wang was supported in part by
the NSF grant DMS-0713670.
The research of Dr. C. Li was in part supported by 
the NSF grant DMS-0908546.
The research of Dr. S. Wang was supported by
the Grants NSFC 10771009 and PHR-IHLB 200906103.
The research of Dr. X. Yu was in part supported by the
Faculty of Science start-up fund of University of Alberta, 
and the research grant from NSERC. This 
work was done during Drs. Li, Wang, and Yu's visit to ACM 
at Caltech.  They would like to thank Prof. T. Hou and Caltech 
for their hospitality during their visit. Finally, we
would like to thank the anonymous referee for the
valuable comments and suggestions.

\end{document}